\newtheorem{thm}{Theorem}[section]
\newtheorem{cor}[thm]{Corollary}
\newtheorem{lem}[thm]{Lemma}
\newtheorem{prp}[thm]{Property}
\theoremstyle{definition}
\newtheorem{defn}[thm]{Definition}
\theoremstyle{remark}
\newtheorem{rmk}[thm]{Remark}
\numberwithin{equation}{section}
\begin{document}

\title{Notes on $A_\infty$-algebra and its endomorphism I}
\author{ Jie \ Zhao }
\address{Jie Zhao, University of Wisconsin-Madison, Madison, WI 53705, USA}
\email{jzhao@math.wisc.edu}

\begin{abstract}
In this notes, we study some basic results about deformation of $A_\infty$ structure. First, we point out
the $A_\infty$ structure itself is always homologous to zero in its Hochschild cohomology, and in contrast in the strict unital case the unit element is always nonzero if the algebra is supported in non-negative degrees. Then we study a natural two-dimensional rescaling of $A_\infty$ structure. We prove that such rescaled $A_\infty$ structure is always quasi-isomorphic to the original one through an explicit strict endomorphism. At last, we study the curved $A_\infty$ structure case, and show the deformation with Maurer-Cartan element or bounding cochain as in the Lagrangian Floer Homology theory can be naturally derived from a weakly strict endomorphism.

\end{abstract}
\maketitle

Given an $A_\infty$ algebra $(A,m)$, we are always interested in certain deformation of the $A_\infty$ structure. From the point of view of deformation theory, formally we need to solve the Maurer-Cartan typer equation. Generally, it is not easy to get explicit solutions and then write down the new $A_\infty$ structure. Here we consider some special cases of explicit deformation. We show that such deformation always can be derived from some (weakly) strict endomorphism equivalently.

As a quick review, first we recall some basical definition and properties about $A_\infty$ algebra and its morphism. We adapt the sign convention for morphism here and make some explicit calculation of the Hochschild cohomology. Using the \textit{euler derivation element} introduced by Seidel in his notes \cite{21}, we are able to show that $[m]=0 \in \text{HH}^2(A,A)$ in both strict and curved case. In contrast, in the unital and strict case, by degree argument, we always have $[\textbf{e}] \ne 0 \in \text{HH}^{0}(A,A)$ if the algebra is supported in non-negative degrees.

Then we study two types of deformation. The first deformation comes from the rescaling of the $A_\infty$ structure. From the construction equation of the $A_\infty$ structure, it turns out there is a natural two-dimensional family of rescaling. Such deformation can be proved to be trivial up to a quasi-isomorphism. In fact, we can explicitly construct such quasi-isomorphism, either by the \textit{euler derivation element} or the \textit{pseudo-isotopy} method developed by Fukaya \cite{4}. The \textit{pseudo-isotopy} method provides an explicit way to integrate the infinitesimal deformation along a path into a real deformation represented by an endomorphism. 

The second deformation is the one we used in the Lagrangian Floer Homology theory. From the study of homomorphism in the curved $A_\infty$ case, we show that the Maurer-Cartan equation and thus its induced deformation of $A_\infty$ structure could be naturally derived from the construction equation of some weakly strict endomorphism, which is called \textit{an almost identity endomorphism} in the later part. In this way, we are able to get a general condition for the \textit{Maurer-Cartan element} or \textit{bounding cochain} to deform a curved $A_\infty$ structure into a simpler one to achieve the cohomology theory.\\

\newpage

\section{Strict $A_\infty$ structure}

First let us follow Keller \cite{12} and make a quick review of strict $A_\infty$ algebra. Notice that here we adapt the sign system of \cite{17} for morphisms instead, which will be better for later calculation of Hochschild cohomology. As usual, our ground field is degree free. 

\begin{defn}
A strict or non-curved $A_\infty$-algebra $(A, m)$ over filed \textbf{F} is a $\mathbf{Z}$-graded vector space: 
$
A= \bigoplus_{k \in Z} A^k 
$
endowed with graded $\textbf{F}$-linear maps
$
m_n: A^{\otimes n} \rightarrow A, n \geq 1
$
of degree $(2-n)$ satisfying the construction equations:
\begin{displaymath}
\sum_{n = r+s+t} (-1)^{rs+t} m_u (\mathbf{I}^{\otimes r}\otimes m_s \otimes \mathbf{I}^{\otimes t})=0          
\end{displaymath}
\end{defn}

In the strict case, we have the index $u = r+1+t$ with $r,t \geq 0$ and $s \geq 1$. Thus we always have $u \geq 1$.
The first equation comes as: $m_1^2 =0$ for $n=1$.

Notice that when these formulas are applied to elements, additional signs appear because of the Koszul
sign rule:
\begin{displaymath}
(f \otimes g)(x \otimes y)= (-1)^{|g||x|}f(x)\otimes g(y)
\end{displaymath}
where $f, g$ are graded maps, $x,y$ are homogeneous elements and the $|g|, |x|$ denote
their degrees.

\begin{defn}
A homomorphism of strict $A_\infty$-algebras $f: (A,m^A) \rightarrow (B,m^B)$ is a family of graded maps
$
f_n: A^{\otimes n} \rightarrow B
$
of degree $(1-n)$ satisfying the compatible equations:
\begin{displaymath}
\sum_{n = r+s+t} (-1)^{rs+t} f_u (\mathbf{I}^{\otimes r}\otimes m^A_s \otimes \mathbf{I}^{\otimes t})=   
\sum_{n = i_1 + \cdots i_k} (-1)^{sign} m^B_k (f_{i_k} \otimes \cdots \otimes f_{i_1})       
\end{displaymath}
here the sign on the right hand side is given by:
\begin{displaymath}
sign = (k-1)(i_1-1)+(k-2)(i_2-1)+ \cdots + 2(i_{k-2}-1)+(i_{k-1}-1)
\end{displaymath}
If in addition, $f_1$ induces an isomorphism from $H^*(A,m^A_1)$ to $H^*(B,m^B_1)$, then the homomorphism $f$ is called a quasi-isomorphism.
\end{defn}

\begin{defn}
An $A_\infty$ homomorphism $f: (A,m^A) \rightarrow (B,m^B)$ is called a strict homomorphism if $f_n = 0$ for all $n \geq 2$. In the $A=B$ case, we call it a strict endomorphism.
\end{defn}
Notice that in this case, the equation for the homomorphism maps can be simplified to:
\begin{displaymath}
f_1 m^A_n = m^B_n (f_1 \otimes f_1 \otimes \cdots \otimes f_1)
\end{displaymath}

Moreover, if we have $A=B$ and the strict endomorphism $f: (A,\tilde{m}) \rightarrow (A,m)$ has the nonzero component $f_1$ an automorphism on $A$. Then in fact we will end with:
\begin{displaymath}
\tilde{m}_n = f_1^{-1} \circ m_n (f_1 \otimes f_1 \otimes \cdots \otimes f_1)
\end{displaymath}
Then $\tilde{m}$ is just the pullback $A_\infty$ structure on $A$ by endomorphism $f$. Thus the study of strict homomorphism becomes much easier than the general cases. In fact, in some geometrical situations, study of such homomorphism already gives us some important information of the structure.

\begin{defn} A strict $A_\infty$-algegra $(A,m^A)$ is isomorphic to $(B,m^B)$, if there exists $A_\infty$ homomorphism $f: A \rightarrow B$
and $g: B \rightarrow A$ such that $f\circ g$ and $g\circ f$ are both homotopy to identity.
\end{defn}

As a specialty for $A_\infty$ structure, we have the following \textit{Whitehead theorem} about quasi-isomorphism and isomorphism.
\begin{thm} For two strict $A_\infty$-algebra $A$ and $B$, if there exists a quasi-isomorphism $f: A \rightarrow B$, then $A$ and $B$ are isomorphic.\\
\end{thm}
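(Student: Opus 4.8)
The plan is to construct an explicit homotopy inverse $g:B\to A$ and then exhibit the two $A_\infty$-homotopies $f\circ g\simeq\mathrm{id}_B$ and $g\circ f\simeq\mathrm{id}_A$ required by the definition of isomorphic just given. Both the construction of $g$ and the construction of the homotopies are carried out by the same obstruction-theoretic induction on the arity $n$ of the components being built. At arity $n$ the unknown component $g_n\in\mathrm{Hom}(B^{\otimes n},A)$ must satisfy one linear equation $\delta(g_n)=R_n$, where $\delta$ is the differential induced by the linear products $m_1^A$ and $m_1^B$ and $R_n$ is a fixed multilinear map built from the higher products $m^A,m^B$ and the already-chosen $g_1,\dots,g_{n-1}$. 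The hypothesis that $f_1$ is a quasi-isomorphism enters only to guarantee that the class of $R_n$ vanishes, so that $g_n$ exists.

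First I would fix the base datum. Over the ground field $\mathbf{F}$, the chain map $f_1:(A,m_1^A)\to(B,m_1^B)$ induces an isomorphism on cohomology and therefore admits a homotopy-inverse chain map $g_1:(B,m_1^B)\to(A,m_1^A)$; I fix such a $g_1$ together with homotopies $h^A,h^B$ realizing $g_1f_1-\mathrm{id}_A=m_1^Ah^A+h^Am_1^A$ and $f_1g_1-\mathrm{id}_B=m_1^Bh^B+h^Bm_1^B$. For the inductive step, supposing $g_1,\dots,g_{n-1}$ solve the morphism equations in all arities below $n$, I split the arity-$n$ morphism equation into the part containing $g_n$, which assembles into $\delta(g_n)$, and the remaining part $R_n$. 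One then verifies that $R_n$ is a $\delta$-cocycle; this is a formal consequence of the $A_\infty$-relations and the lower-arity equations, and it is the step where the signs $(-1)^{rs+t}$ and the sign convention fixed above must be propagated with care. Finally, computing the cohomology of $(\mathrm{Hom}(B^{\otimes n},A),\delta)$ by the K\"unneth formula over $\mathbf{F}$ identifies it with $\mathrm{Hom}\bigl((H^*B)^{\otimes n},H^*A\bigr)$, and the invertibility of $H^*(f_1)$ forces $[R_n]=0$, so $\delta(g_n)=R_n$ is solvable and the induction continues.

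It remains to produce the two homotopies. The composite $f\circ g$ is an $A_\infty$-endomorphism of $B$ whose linear part $f_1g_1$ is chain-homotopic to $\mathrm{id}_B$ through $h^B$, and symmetrically $g\circ f$ has linear part $g_1f_1\simeq\mathrm{id}_A$. I would then establish the auxiliary fact that an $A_\infty$-endomorphism whose linear part is homotopic to the identity is itself $A_\infty$-homotopic to the identity morphism: this is again an arity induction, the base case being the chain homotopy $h^B$ (respectively $h^A$) and each higher step solving an equation $\delta(\text{unknown})=(\text{known})$ whose obstruction is killed by the same K\"unneth/quasi-isomorphism mechanism. Applying this to $f\circ g$ and to $g\circ f$, and using that $A_\infty$-homotopy is an equivalence relation compatible with composition, yields $f\circ g\simeq\mathrm{id}_B$ and $g\circ f\simeq\mathrm{id}_A$, which is exactly the isomorphism required.

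The main obstacle is the inductive step, and inside it two coupled points. The first is the genuinely combinatorial verification that $R_n$ (and its analogue in the homotopy construction) is a $\delta$-cocycle; with the sign conventions of this section this demands careful bookkeeping, and it is easy to misplace a sign and destroy the cocycle property on which everything rests. The second is the vanishing of $[R_n]$: this is conceptually transparent over a field, being the K\"unneth identification together with the invertibility of $H^*(f_1)$, but one must confirm that $\delta$ is exactly the tensor-and-Hom differential so that K\"unneth applies verbatim, and one must check that no adjustment of the previously chosen $g_i$ is forced. Conceptually the whole theorem is the assertion that passing to the bar construction turns $f$ into a quasi-isomorphism of the associated differential graded tensor-coalgebras, which over a field is invertible up to homotopy; I regard the explicit induction above as the concrete realization of that principle and would present it in the explicit style of these notes rather than through the coalgebra formalism.
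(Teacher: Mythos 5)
The paper states this Whitehead theorem without proof, quoting it as a standard fact (it is proved in the references \cite{12}, \cite{20} via the minimal model theorem), so your proposal can only be judged on its own terms. Your overall strategy --- build $g$ by induction on arity, then build the two homotopies --- is the standard obstruction-theoretic route, and the formal skeleton (one linear equation $\delta(g_n)=R_n$ per arity, with $R_n$ a $\delta$-cocycle by the $A_\infty$-relations and the lower-arity equations) is right. But the two places where the hypothesis is actually used are both handled incorrectly, and these are genuine gaps rather than omitted routine checks.

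First, the vanishing of $[R_n]$ does not follow from the K\"unneth identification together with the invertibility of $H^*(f_1)$. The class $[R_n]\in\mathrm{Hom}\bigl((H^*B)^{\otimes n},H^*A\bigr)$ is exactly the obstruction to extending the chain map $g_1$ to an $A_\infty$-morphism, and for a general quasi-isomorphism $g_1$ that is multiplicative on cohomology this obstruction is nonzero --- it is precisely what detects Massey products and non-formality. To kill it you must use the higher components $f_2,f_3,\dots$ of $f$, not merely $H^*(f_1)$; the clean way is to construct $g_n$ and the arity-$n$ component of the homotopy $f\circ g\simeq\mathrm{id}_B$ in the same inductive step, using that post-composition with $f_1$ gives a quasi-isomorphism of complexes $\mathrm{Hom}(B^{\otimes n},A)\to\mathrm{Hom}(B^{\otimes n},B)$, so that the pair (component of $g$, component of the homotopy) can always be solved for. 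Second, your auxiliary fact --- that an $A_\infty$-endomorphism whose linear part is chain-homotopic to the identity is $A_\infty$-homotopic to the identity --- is false. Already for a minimal $B$ (where $m_1=0$, so chain-homotopic to the identity forces $\phi_1=\mathrm{id}$), an endomorphism with $\phi_1=\mathrm{id}$ and $\phi_2$ a non-exact Hochschild cocycle in $C^{2,-1}(B,B)$ is a nontrivial gauge transformation and is not homotopic to the identity. Consequently a $g$ produced by your first induction, with its uncontrolled choices at each stage, need not satisfy $f\circ g\simeq\mathrm{id}_B$; the choices of $g_n$ must be coupled to the homotopy as above. Once $f\circ g\simeq\mathrm{id}_B$ is secured, $g\circ f\simeq\mathrm{id}_A$ follows by the usual two-sided-inverse juggling; alternatively, all of these difficulties are avoided by first replacing $A$ and $B$ by minimal models, where a quasi-isomorphism has $f_1$ bijective and a strict inverse is constructed recursively with no obstructions at all --- this is the route taken in \cite{12} and \cite{20}.
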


Next, let us recall the construction of Hochschild cohomology of $A_\infty$ algebra, which is important for the study of the infinitesimal deformation of the $A_\infty$ structure. Here we use the description given in \cite{19}.

Consider a Lie algebra structure on the bigraded vector space:
\begin{displaymath}
C^{n,m}(A,A) = \text{Hom}^m(A^{\otimes n}, A) = \prod_i \text{Hom}( (A^{\otimes n})^i, A^{i+m} )
\end{displaymath}
For $f \in C^{n,k}(A,A)$ and $g \in  C^{m,l}(A,A)$, the Lie bracket or Gerstenhaber bracket $[f,g] \in C^{n+m-1, l+k}$ is given by the following:
\begin{displaymath}
[f,g] = \sum_{i=0}^{n-1}(-1)^{\delta_1} f(\mathbf{I}^{\otimes i} \otimes g \otimes \mathbf{I}^{\otimes n-i-1})- (-1)^{(n+k-1)(m+l-1)} \sum_{i=0}^{m-1}(-1)^{\delta_2} g(\mathbf{I}^{\otimes i} \otimes f \otimes \mathbf{I}^{\otimes m-i-1})
\end{displaymath}
here:
\begin{displaymath}
\delta_1 = (n-1)(m-1)+(n-1)l+i(m-1), \quad \delta_2 = (m-1)(n-1)+(m-1)k+i(n-1)
\end{displaymath}
Notice that in the formula if we have $n=0$ or $m=0$, then the corresponding term should take $0$ value. For example, if $f \in C^{n,k}(A,A)$ and $g \in C^{0,l}$, then:
\begin{displaymath}
[f,g]= \sum_{i=0}^{n-1}(-1)^{\delta_1} f(\mathbf{I}^{\otimes i} \otimes g \otimes \mathbf{I}^{\otimes n-i-1}) 
\end{displaymath}
Also from the formula, we have $[f,g] = -(-1)^{(n+k-1)(m+l-1)}[g,f]$. Thus when the total degree of $f$ or (n+k) is odd, we always have $[f,f]=0$.\\

For $A_\infty$ structure, we always have the componet map $m_k \in C^{k, 2-k}$, with total degree 2. Take the notation $m= \sum m_n$, by definition we have the following facts: 

1)  m $\in C^{*,*}(A,A)$ of total degree 2,

2) [m,  m]=0,

3) D = [m, -] is a differential on $C^{*,*}(A,A)$.

\begin{rmk}
Here fact 3) comes from the Jacobi identity of the Lie bracket and also the fact 1) and 2).
\end{rmk}

\begin{defn}
Let $(A,m)$ be a strict $A_\infty$ algebra, then its Hochschild cohomology is defined as:
\begin{displaymath}
\text{HH}^*(A,A) = \text{H}^*(\prod_i C^{i, *-i}(A,A), D)
\end{displaymath}
\end{defn}
For example, we always have: $[m] \in \text{HH}^2(A,A)$. In fact, we can show it is always homologous to zero in the Hochschild cohomology.

\begin{defn}
Given an $A_\infty$-algebra $(A,m)$, we call the element $\mathbf{E} \in C^{1,0}(A,A)$ its \textit{euler derivation element}, which is given by:
\begin{displaymath}
\mathbf{E}(\alpha) = deg(\alpha) \cdot \alpha, \ \forall \alpha \in A
\end{displaymath}
\end{defn}

\begin{thm}
For any strict $A_\infty$ algebra $(A,m)$, we always have the following fact: $[m]=0 \in \text{HH}^2(A,A)$.
\end{thm}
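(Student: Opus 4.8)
The plan is to exhibit the cochain $m$ itself as a coboundary for the Hochschild differential $D = [m,-]$. Since $[m,m]=0$ by fact 2), the element $m$ is $D$-closed and of total degree $2$, so $[m]$ is a genuine class in $\text{HH}^2(A,A)$; it then suffices to produce an $\eta$ with $D\eta = [m,\eta] = m$. The natural candidate for $\eta$ is built from the euler derivation element $\mathbf{E}\in C^{1,0}(A,A)$.

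First I would compute $[m,\mathbf{E}]$ componentwise from the Gerstenhaber bracket. The key simplification is that $\mathbf{E}$ has total degree $0$: taking $f = m_n \in C^{n,2-n}$ and $g=\mathbf{E}\in C^{1,0}$, so that the input number and degree of $g$ are $1$ and $0$, one checks that $\delta_1 = \delta_2 = 0$ and that the global prefactor $(-1)^{(n+k-1)(m+l-1)}$ equals $1$, whence
\[
[m_n, \mathbf{E}] = \sum_{i=0}^{n-1} m_n(\mathbf{I}^{\otimes i}\otimes\mathbf{E}\otimes\mathbf{I}^{\otimes n-i-1}) - \mathbf{E}\circ m_n .
\]
Evaluating on a homogeneous tensor $\alpha_1\otimes\cdots\otimes\alpha_n$, the inner insertions contribute $\left(\sum_j \deg\alpha_j\right)m_n(\alpha_1\otimes\cdots\otimes\alpha_n)$, with no Koszul signs since $\mathbf{E}$ is of degree $0$, while the outer term contributes $\mathbf{E}\,m_n = \big((2-n)+\sum_j\deg\alpha_j\big)m_n$ because $m_n$ has degree $2-n$. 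Subtracting gives $[m_n,\mathbf{E}] = (n-2)\,m_n$, and hence $[m,\mathbf{E}] = \sum_n (n-2)\,m_n$.

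This is not yet $m$, so the second step is a correction by the identity cochain $\mathbf{I}\in C^{1,0}$, $\mathbf{I}(\alpha)=\alpha$. The identical computation, in which the inner insertions simply reproduce $m_n$ and the outer term is $m_n$ itself, yields $[m_n,\mathbf{I}] = (n-1)m_n$, hence $[m,\mathbf{I}] = \sum_n(n-1)m_n$. Taking the difference, $[m, \mathbf{I}-\mathbf{E}] = \sum_n \big((n-1)-(n-2)\big)m_n = \sum_n m_n = m$; equivalently, $\mathbf{I}-\mathbf{E}$ is, up to sign, the euler derivation for the shifted grading $\deg(\alpha)-1$ natural to the bar picture. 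Since $\mathbf{I}-\mathbf{E}\in C^{1,0}$ has total degree $1$ and $D$ raises total degree by $1$, we conclude that $m = D(\mathbf{I}-\mathbf{E})$ and therefore $[m]=0\in\text{HH}^2(A,A)$.

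The main obstacle I anticipate is purely the sign bookkeeping: verifying that the Gerstenhaber exponents $\delta_1,\delta_2$ and the global prefactor really do vanish for a degree-$0$ cochain, and tracking the Koszul rule so that the internal degree shift is accounted for correctly. It is precisely this shift that forces the primitive to be $\mathbf{I}-\mathbf{E}$ rather than $\mathbf{E}$ alone. I expect the same argument to carry over to the curved case essentially verbatim, since nowhere did I use $u\ge 1$ or the vanishing of $m_0$; the coefficient pattern $(n-1)$ versus $(n-2)$ still produces $m_0$ with coefficient $1$.
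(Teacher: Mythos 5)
Your argument is correct and is essentially the paper's own proof: the paper likewise computes $[m_k,\mathbf{I}]=(k-1)m_k$ and $[m_k,\mathbf{E}]=(k-2)m_k$ and concludes $D(\mathbf{I}-\mathbf{E})=m$. Your sign verification for the Gerstenhaber bracket and your closing remark that the argument extends verbatim to the curved case both check out (the paper makes the same observation in Section 3).
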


\begin{proof}

First let us consider the identity element $\mathbf{I}: A \rightarrow A$, by definition we will get $\mathbf{I} \in C^{1,0}(A,A)$ and $ [m_k, \mathbf{I}] = (k-1)m_k$, thus we have:
\begin{displaymath}
D(\mathbf{I}) = \sum_{k \geq 1} (k-1)m_k
\end{displaymath}

Then let us take the \textit{euler derivation element} $\mathbf{E} \in C^{1,0}(A,A)$. By definition, we will get another: 
\begin{displaymath}
D(\mathbf{E}) = \sum_{k \geq 1} (k-2)m_k
\end{displaymath}

Therefore, we have the following identity:
\begin{displaymath}
D(\mathbf{I}-\mathbf{E}) = \sum_{k \geq 1} m_k= m
\end{displaymath}

i.e. $[m]=0 \in \text{HH}^2(A,A)$.

\end{proof}

Notice that the above solution $(\mathbf{I}-\mathbf{E}) \in C^{1,0}(A,A)$. In fact, by degree reason, there might be other solutions for $D(a)=m$, for example with $a \in C^{0,1}(A,A)$. In a special case, if $\{m_k\}$ has infinite non-zero component elements, and moreover has an element $a \in C^{0,1}(A,A)$ with the \textit{divisor property} as follows:
\begin{displaymath}
m_1(a(\textbf{1}))=0, \quad \sum_{i=0}^{k-1} (-1)^i m_k(\mathbf{I}^{\otimes i} \otimes a(\textbf{1}) \otimes \mathbf{I}^{\otimes k-i-1})= m_{k-1} \ \text{for} \ k \geq 2
\end{displaymath}
Then $a \in C^{0,1}(A,A)$ will become another solution, i.e. $D(a)=m$.

As for other cocycles in the Hochschild cohomology, generally it is not easy to get explicit construction. But we still have some clue if we simply consider element in $C^{0,l}(A,A)$ for example in the unital case. 

\begin{cor}
Given a unital strict $A_\infty$ algebra $(A,m,\textbf{e})$ with unit element $\textbf{e}$, we always have $D(\textbf{e})=0$. Moreover, if the algebra is supported in non-negative degrees, then we will have $[\textbf{e}] \ne 0 \in \text{HH}^{0}(A,A)$, thus \text{HH}$^*(A,A)$ is non-vanishing.
\end{cor}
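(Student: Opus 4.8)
The plan is to prove the two assertions in turn: first that $\mathbf{e}$ is a Hochschild $0$-cocycle, and then that it cannot be a coboundary once $A$ is concentrated in non-negative degrees. For the cocycle claim I would compute $D(\mathbf{e}) = [m, \mathbf{e}]$ directly from the bracket formula. Since $\mathbf{e} \in C^{0,0}(A,A)$ has arity $0$, only the terms in which $\mathbf{e}$ is plugged into a slot of some $m_k$ survive, so that $D(\mathbf{e}) = \sum_{k \geq 1}[m_k, \mathbf{e}]$ with
\begin{displaymath}
[m_k, \mathbf{e}] = (-1)^{k-1}\sum_{i=0}^{k-1}(-1)^i\, m_k(\mathbf{I}^{\otimes i}\otimes \mathbf{e} \otimes \mathbf{I}^{\otimes k-i-1}).
\end{displaymath}
The strict unit axioms $m_k(\cdots \otimes \mathbf{e} \otimes \cdots)=0$ for $k \neq 2$ kill all of these except $k=2$, and the two remaining terms are the left- and right-unit operators $m_2(\mathbf{e}\otimes \mathbf{I})$ and $m_2(\mathbf{I}\otimes \mathbf{e})$; the unit identities identify both with the identity map, and the opposite signs attached by the bracket make them cancel, giving $D(\mathbf{e})=0$. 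The one thing to watch here is the bookkeeping of the Koszul signs in the $k=2$ term, which is exactly where a convention error would surface.

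For the second claim I would argue by an arity/degree comparison. First note $\mathbf{e} \neq 0$, since it acts as the identity through $m_2$, so $\mathbf{e}=0$ would force $A=0$. Now suppose toward a contradiction that $\mathbf{e} = D(b)$ for some cochain $b$ of total degree $-1$, and decompose it by arity as $b = \sum_{i\geq 0} b_i$ with $b_i \in C^{i,\,-1-i}(A,A)$. Because $\mathbf{e}$ lives purely in arity $0$, it suffices to inspect the arity-$0$ component of $D(b)=[m,b]$. The bracket $[m_k, b_i]$ has arity $k+i-1$, and with $k \geq 1$, $i \geq 0$ the equation $k+i-1=0$ forces $k=1$ and $i=0$, so the arity-$0$ part of $D(b)$ is just $[m_1, b_0]$.

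The crux is then to observe that $b_0$ must vanish: we have $b_0 \in C^{0,-1}(A,A) = \text{Hom}(\mathbf{F}, A^{-1}) \cong A^{-1}$, and $A^{-1}=0$ precisely because $A$ is supported in non-negative degrees. Hence the arity-$0$ part of any total-degree-$0$ coboundary is zero, whereas the arity-$0$ part of $\mathbf{e}$ is $\mathbf{e} \neq 0$, a contradiction. This yields $[\mathbf{e}] \neq 0 \in \text{HH}^{0}(A,A)$, and in particular $\text{HH}^*(A,A)$ is non-vanishing. I expect the main obstacle to be purely organizational rather than deep: pinning down that $[m_1,b_0]$ is the unique source of the arity-$0$ part and that the support hypothesis forces $b_0=0$, together with the sign verification in the first part, are the only places that require genuine care.
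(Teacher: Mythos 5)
Your proposal is correct and follows essentially the same route as the paper: the first part from the unit axioms (which you spell out more explicitly than the paper does), and the second part by the same arity/degree count forcing any potential preimage of $\mathbf{e}$ into $C^{0,-1}(A,A)\cong A^{-1}=0$. No gaps.
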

\begin{proof} The first part directly comes from the property of unit. Now let us consider the second part by contradiction. Suppose there is an element $f \in C^{m,n}$ such that $[m_k,f]$ contains $\textbf{e}$ term. Let us carefully check its degree and consider the degree equation, that is:
\begin{displaymath}
k+ m-1 =0, \quad 2-k+n =0
\end{displaymath}
Notice that we have the requirement of $k \geq 1$ and $m \geq 0$, thus the only possibility is $m=0, n=-1$, i.e. $f \in C^{0,-1}$ and $f(\textbf{1}) \in A^{-1}$. But here our vector space is non-negative graded. Contradiction.

\end{proof}

\begin{rmk}
Follow this consideration, in fact we will get a natural inclusion map:
\begin{displaymath}
i: \{ c \in C^{0,l}(A,A)\ | \ [m,c]=0  \} \hookrightarrow \{ c(\textbf{1}) \in A^l \ | \ m_1(c(\textbf{1}))=0  \}
\end{displaymath}
Thus in the strict case, the Hochschild cohomology will contain partial data of the $m_1$-cohomology of the original $A_\infty$ algebra, that is some special element with each cyclic sum of $m_k$ component vanishes.
\end{rmk}

\newpage

\section{rescaling of $A_\infty$ structure}

In this part, we consider a basic deformation of the $A_\infty$ structure given by rescaling. We still focus on
strict $A_\infty$ algebra case here. The discussion in the curved case will be given in the later part.

The rescaling here comes from the following observation of the construction equation of $A_\infty$ structure: in the construction equation, when $n$ is fixed, we have: $u+s = n+1$ as fixed value . Therefore, by rescaling of the form $\tilde{m}_k = \lambda^{f(k)} \cdot m_k $, as long as the function $f(k)$ satisfies the identity:
\begin{displaymath}
f(u)+f(s)=const, \quad \forall \ u+s=n+1
\end{displaymath}
The new maps $\{\tilde{m}_k\}$ give another $A_\infty$-strucutre, i.e. it still satisfies the construction equation.
Notice that functions with the above property must have the form of $f(k)=a\cdot k+b$ as a linear function. Thus we have the following:

\begin{thm}
If $(A, m) $ is an $A_\infty$ algebra, then by the rescaling of each component map as:
\begin{displaymath}
\tilde{m}_k = \lambda^{a\cdot k +b} \cdot m_k, \ \text{with} \ \lambda, a, b \in \textbf{F}
\end{displaymath}
we get another $A_\infty$ algebra $(A, \tilde{m} )$.
\end{thm}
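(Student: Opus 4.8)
The plan is to verify the $A_\infty$ construction equation directly for the rescaled maps $\{\tilde{m}_k\}$, reducing it term-by-term to the construction equation already satisfied by $\{m_k\}$. First I would note that since each $\tilde{m}_k = \lambda^{ak+b}m_k$ differs from $m_k$ only by a scalar in the degree-free field $\textbf{F}$, the degree of $\tilde{m}_k$ is still $2-k$. In particular, the Koszul signs that appear when the maps act on homogeneous elements depend only on the degrees of the maps and their arguments, both of which are unchanged; hence the sign factors $(-1)^{rs+t}$ in the construction equation are identical for $m$ and $\tilde{m}$, and no new signs are introduced by the rescaling.

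Next I would write out, for each fixed $n \geq 1$, the relation we must check, recalling $u = r+1+t$:
\begin{displaymath}
\sum_{n=r+s+t}(-1)^{rs+t}\tilde{m}_u(\mathbf{I}^{\otimes r}\otimes \tilde{m}_s \otimes \mathbf{I}^{\otimes t}) = 0.
\end{displaymath}
Substituting $\tilde{m}_u = \lambda^{au+b}m_u$ and $\tilde{m}_s = \lambda^{as+b}m_s$, each summand acquires the scalar factor $\lambda^{a(u+s)+2b}$. Invoking the key observation already recorded in the excerpt, namely that $u+s = n+1$ is constant over all splittings $n=r+s+t$, this scalar equals $\lambda^{a(n+1)+2b}$ for every term in the sum, independently of the individual values of $r,s,t$.

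The crucial step is then to factor this common scalar out of the sum:
\begin{displaymath}
\sum_{n=r+s+t}(-1)^{rs+t}\tilde{m}_u(\mathbf{I}^{\otimes r}\otimes \tilde{m}_s \otimes \mathbf{I}^{\otimes t}) = \lambda^{a(n+1)+2b}\sum_{n=r+s+t}(-1)^{rs+t}m_u(\mathbf{I}^{\otimes r}\otimes m_s \otimes \mathbf{I}^{\otimes t}),
\end{displaymath}
where the inner sum vanishes precisely because $(A,m)$ is an $A_\infty$ algebra. Thus the left-hand side is zero for every $n \geq 1$, which is exactly the statement that $(A,\tilde{m})$ satisfies all the construction equations and is therefore an $A_\infty$ algebra.

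There is no serious obstacle in this argument: the entire proof hinges on the linearity of the exponent $f(k)=ak+b$, which is what forces $f(u)+f(s)=a(u+s)+2b$ to depend only on the constant $u+s=n+1$ and not on the particular splitting. The only point requiring a moment's care is confirming that passing from maps to elements introduces no extra Koszul signs that would block the factorization; but since scalar multiplication preserves degrees, the sign bookkeeping is untouched and the common factor can be pulled out verbatim.
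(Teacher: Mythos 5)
Your proof is correct and follows essentially the same route as the paper: the whole argument rests on the observation that $u+s=n+1$ is constant over all splittings $n=r+s+t$, so the common scalar $\lambda^{a(n+1)+2b}$ factors out of the construction equation, which then vanishes because $(A,m)$ is an $A_\infty$ algebra. Your added remark that the rescaling preserves the degree $2-s$ of each $m_s$ and hence leaves the Koszul signs untouched is a worthwhile explicit check that the paper leaves implicit.
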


\begin{rmk} Let us consider the set of all $A_\infty$ structure on the space $A$, i.e. $\mathcal{M}(A, m) = \{ \overline{m} \ |\ \overline{m} \ \text{is any $A_\infty$ structure on A}\}$. If we equip the space $\mathcal{M}(A,m)$ with certain topology, then through the simple path: $m(t) := t \cdot m, t \in [0,1]$, we will get the space of all $A_\infty$ structure on $A$ is path connected and contractible. Thus it would be better to study $ \mathcal{M}(A,m)-\{  \textbf{0}  \} $ in stead.
\end{rmk}

Now let us consider the relation between the rescaled $A_\infty$ structure $\tilde{m}$ and the original one $m$. Notice that when $\lambda \ne 0$, the deformation given above essentially comes in two families.\\

\textit{type 1.} If we take $\lambda \ne 0, a=1, b=-1$, that is $$\tilde{m}_k = \lambda^{k-1} \cdot m_k, \quad \forall k \in \mathbf{N}$$ Then we have $(A, \tilde{m}) $ is quasi-isomorphic to $(A, m) $. Here we just need a strict homomorphism $f: (A, \tilde{m}) \rightarrow (A,  m) $ given by: $f_1 = \lambda $ and $f_n=0$ for all $n \ne 1$. Then it is easy to check it is a quasi-isomorphism.\\

\textit{type 2.} If we take $\lambda \ne 0, a=1, b=-2$, that is $$\tilde{m}_k = \lambda^{k-2} \cdot m_k, \quad \forall k \in \mathbf{N}$$ We still have $(A, \tilde{m}) $ is quasi-isomorphic to $(A, m) $. In this case, the strict quasi-isomorphism  is induced by the \textit{euler derivation element}, that is $f_1 = \lambda^{\mathbf{E}}$ given by:
\begin{displaymath}
f_1(\alpha) = \lambda^{\mathbf{E}}(\alpha):= \lambda^{deg(\alpha)} \cdot \alpha, \ \forall \alpha \in A
\end{displaymath}

General case is the combination of the two types, thus we have the result:
\begin{cor}
Given an $A_\infty$-algebra $(A,m)$, the rescaling of the $A_\infty$ structure given as above is always a quasi-isomorphism if $\lambda \ne 0$.
\end{cor}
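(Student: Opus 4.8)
The plan is to exploit the fact, already flagged in the sentence \textit{``General case is the combination of the two types''} preceding the statement, and to reduce the general rescaling to a composite of a \textit{type 1} and a \textit{type 2} rescaling. First I would record the elementary identity that every affine exponent splits as
\begin{displaymath}
a k + b = (2a+b)(k-1) + (-a-b)(k-2),
\end{displaymath}
which one verifies at once by comparing the coefficient of $k$ and the constant term on the two sides. This is the only genuine computation in the argument.

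Next I would read this identity multiplicatively. Setting $\mu = \lambda^{2a+b}$ and $\nu = \lambda^{-a-b}$, it gives $\lambda^{ak+b} = \mu^{k-1}\,\nu^{k-2}$, so that the rescaled structure $\tilde{m}_k = \lambda^{ak+b} m_k$ is exactly what one obtains from $m_k$ by first performing the \textit{type 1} rescaling with parameter $\mu$ and then the \textit{type 2} rescaling with parameter $\nu$. Because both operations are merely multiplication of each $m_k$ by a scalar, they commute and the order is irrelevant. Since $\lambda \neq 0$, both $\mu$ and $\nu$ are nonzero, so each of the two intermediate rescalings is a quasi-isomorphism by the \textit{type 1} and \textit{type 2} discussions above: the first through the strict homomorphism with $f_1 = \mu$, the second through the strict homomorphism with $f_1 = \nu^{\mathbf{E}}$.

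It then remains only to observe that quasi-isomorphism is transitive. For strict homomorphisms the composite again has a single nonzero component, namely $\mu\cdot\nu^{\mathbf{E}}$, and since each factor is an automorphism of $A$ their composite induces an isomorphism on $m_1$-cohomology; hence $(A,\tilde{m})$ is quasi-isomorphic to $(A,m)$. Alternatively, one can skip the factorization and write the strict quasi-isomorphism in closed form as $f_1 = \lambda^{2a+b}\cdot(\lambda^{-a-b})^{\mathbf{E}}$, that is $f_1(\alpha) = \lambda^{\,2a+b+(-a-b)\deg(\alpha)}\alpha$, and check the strict homomorphism equation $f_1\,\tilde{m}_n = m_n\,(f_1^{\otimes n})$ degree by degree; using that $f_1$ has degree $0$ (so no Koszul signs intervene), the equation collapses to the scalar identity $\mu^{2-n}\lambda^{an+b} = c^{\,n-1}$ with $c = \lambda^{2a+b}$ and $\mu = \lambda^{-a-b}$, which is precisely the decomposition recorded above.

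I do not expect a real obstacle. The entire content sits in the affine decomposition of $ak+b$ together with the remark that multiplication by a nonzero scalar on each graded piece is invertible and therefore leaves $m_1$-cohomology unchanged. The one point deserving a word of care is that the two rescalings must compose to the general one for \emph{all} $k$ simultaneously; this is guaranteed precisely because the splitting of $ak+b$ holds as an identity of affine functions of $k$, not merely at finitely many values.
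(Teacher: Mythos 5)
Your argument is correct and coincides with the paper's own proof: both rest on the splitting $a k+b=(2a+b)(k-1)+(-a-b)(k-2)$ and the resulting strict quasi-isomorphism $f_1=\lambda^{2a+b}\cdot(\lambda^{\mathbf{E}})^{-a-b}$. The only additions are your explicit degree-by-degree check of the identity $\mu^{2-n}\lambda^{an+b}=c^{\,n-1}$ (which the paper leaves implicit) and a harmless notational swap between $\mu$ and $c$ near the end.
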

\begin{proof}
Consider the rescaling given by:
\begin{displaymath}
\tilde{m}_k = \lambda^{a\cdot k +b} \cdot m_k, \ \text{with} \ \lambda, a, b \in \textbf{F}
\end{displaymath}
From the splitting: $a \cdot k +b = (2a+b)(k-1)+(-a-b)(k-2)$, we just need to take a strict quasi-isomorphism $f: (A, \tilde{m}) \rightarrow (A,  m) $ given by: $$f_1 = \lambda^{2a+b} \cdot (\lambda^{\mathbf{E}})^{-a-b}$$

\end{proof}

\begin{rmk}
In the DG-algebra case, the \textit{type 1} deformation gives the new algebra structure as:
$\tilde{m}_1=m_1, \tilde{m}_2= \lambda m_2$, while the \textit{type 2} deformation gives the new algebra structure as:
$\tilde{m}_1=\lambda^{-1} m_1, \tilde{m}_2= m_2$. As $A_\infty$-algebra, they are both quasi-isomorphic to the original one.
\end{rmk}

The construction of the strict quasi-isomorphism in the \textit{type 2} case indicates the $f_1$ map might be recovered from the relation $D(\mathbf{I}-\mathbf{E})=m $ in the Hochschild cohomology by certain integration of the infinitesimal deformation. Such integration indeed can be achieved, for example by the pseudo-isotopy method developed by Fukaya \cite{4}. Detailed description and recently application of pseudo-isotopy could be found in FOOO's book \cite{5} and Tu's paper \cite{23}. Here we recall the construction as given in \cite{23}.

Given a formal based path $l(t)$ on $\mathcal{M}(A,m)$ with $l(0)=m$, let us denote the $A_\infty$ structure along the path by $m^t = l(t) \in \mathcal{M}(A,m)$, and $\delta^t = m^t - m^0=m^t-m$.

\begin{defn} The based path $l(t)$ is called a pseudo-isotopy if there exists an element $h^t \in Coder(T(A)) \otimes C^{\infty}[0,1]$ with total degree 1, satisfying the Maurer-Cartan equation:
\begin{displaymath}
\frac{d \delta^t}{d t}= [m^t, h^t]
\end{displaymath}

\end{defn}

For example, if we just consider the simple linear path as $l(t) = m + t \cdot \delta$, then one necessary condition to make it into a pseudo-isotopy will be:
\begin{displaymath}
[m, \delta]=0, \quad \text{and} \quad [\delta, \delta]=0.
\end{displaymath}

\begin{thm}[\cite{4}] If a based path $l(t)$ is a pseudo-isotopy, then it induces a formal endomorphism $f: (A,m^1) \rightarrow (A,m^0)$. If the endomorphism $f$ converges, then it becomes a quasi-isomorphism.
\end{thm}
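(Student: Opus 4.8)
The plan is to push the entire problem into the tensor coalgebra (bar construction) $T(A)$, where an $A_\infty$ structure $m^t$ becomes a degree-one coderivation $\hat m^t$ with $\hat m^t \circ \hat m^t = 0$, the element $h^t \in \mathrm{Coder}(T(A))$ becomes a coderivation $\hat h^t$, and an $A_\infty$ homomorphism becomes a counital coalgebra morphism $F \colon T(A) \to T(A)$ intertwining the two codifferentials. Under this dictionary the Gerstenhaber bracket corresponds to the graded commutator of coderivations, so the pseudo-isotopy equation becomes $\frac{d}{dt}\hat m^t = [\hat m^t, \hat h^t]$, and the sought homomorphism $f \colon (A,m^1) \to (A,m^0)$ becomes a coalgebra morphism $F^1$ satisfying $\hat m^0 F^1 = F^1 \hat m^1$.

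First I would integrate $\hat h^t$ to a family of coalgebra morphisms. Define $F^t$ as the unique solution of the linear ODE $\frac{d}{dt}F^t = F^t \circ \hat h^t$ with initial condition $F^0 = \mathrm{Id}$. The structural point is that this flow preserves the affine space of coalgebra morphisms: the tangent space to that space at $F$ is exactly the space of coderivations along $F$, and one checks directly that if $F$ is a coalgebra morphism and $\hat h$ a coderivation then $F \circ \hat h$ is such an $(F,F)$-coderivation; since $F^0 = \mathrm{Id}$ is a coalgebra morphism, $F^t$ remains one for all $t$. Concretely $F^t$ is the path-ordered exponential $\mathrm{Id} + \int_0^t \hat h^{s}\, ds + \int_0^t\!\int_0^{s_1} \hat h^{s_2}\hat h^{s_1}\, ds_2\, ds_1 + \cdots$, whose Taylor coefficients give the formal components $f_n \colon A^{\otimes n} \to A$.

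Next I would show $F^t$ intertwines the codifferentials for all $t$. Set $G^t := \hat m^0 \circ F^t - F^t \circ \hat m^t$, so $G^0 = 0$. Differentiating and substituting both the flow equation and the pseudo-isotopy equation, with $[\hat m^t, \hat h^t] = \hat m^t\hat h^t - (-1)^{|m||h|}\hat h^t\hat m^t$, the terms reorganize as $\frac{d}{dt}G^t = G^t \circ \hat h^t + \big((-1)^{|m||h|}-1\big)\, F^t\hat h^t\hat m^t$; since $m$ has total Gerstenhaber degree $2$ and $h^t$ total degree $1$, the product $|m||h|$ is even, the remainder vanishes, and $G^t$ satisfies a homogeneous linear ODE with $G^0 = 0$. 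Uniqueness forces $G^t \equiv 0$, and at $t=1$ this yields $\hat m^0 F^1 = F^1 \hat m^1$, i.e. $f := F^1$ is a formal $A_\infty$ homomorphism $(A,m^1) \to (A,m^0)$.

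Finally, for the quasi-isomorphism claim under convergence, I would isolate the linear part. Once the iterated integrals defining each $f_n$ converge, $f$ is an honest homomorphism, and its length-one component $f_1^t$ obeys the restricted ODE $\frac{d}{dt}f_1^t = f_1^t \circ h_1^t$ with $f_1^0 = \mathrm{Id}_A$; this is solved by a path-ordered exponential, which is invertible since the ODE can be integrated backward in time to produce its inverse. The $n=1$ piece of the homomorphism equation reads $f_1 m^1_1 = m^0_1 f_1$, so $f_1$ is a chain map for the differentials $m^1_1, m^0_1$; being in addition a linear isomorphism, it induces an isomorphism on $m_1$-cohomology, whence $f$ is a quasi-isomorphism. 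The genuine difficulty lies precisely in the convergence hypothesis: the $f_n$ are a priori infinite sums, and in the geometric applications one secures convergence through an energy/Novikov filtration that renders each coefficient a finite sum. This is the sole non-formal step, which is why the statement is phrased conditionally; the vanishing identity $G^t \equiv 0$ is the purely algebraic heart of the argument.
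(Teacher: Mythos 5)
Your argument is correct, and it takes a more structural route than the paper does: the paper states this theorem with a citation to Fukaya and then only \emph{sketches} the construction of the components $f_k$ as sums over ribbon trees of time-ordered integrals of the kernels $h^{\tau(v)}_{val(v)-1}$, deferring the verification that these maps satisfy the homomorphism equations to the references. You instead work on the bar coalgebra, define $F^t$ by the flow $\frac{d}{dt}F^t = F^t\circ\hat h^t$, and verify the intertwining by showing the defect $G^t=\hat m^0F^t-F^tm^t$ solves a homogeneous linear ODE with $G^0=0$; this is exactly the piece of the proof the paper omits, and your computation of $\dot G^t=G^t\hat h^t$ is the unique way the cancellation can close given the paper's bracket convention $[m^t,h^t]=m^t\{h^t\}-h^t\{m^t\}$, so the choice of right multiplication in the flow is forced and correct. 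Unwinding your Picard iteration into components recovers precisely the paper's tree sum: the coalgebra-morphism property distributes the iterated compositions of the coderivations $\hat h^{s}$ over planar trees, and the simplices $0\le s_k\le\cdots\le s_1\le t$ are the time-ordering sets $M^t(T)$. The one point to double-check is the direction of the path ordering: your expansion of $\dot F=F\hat h$ places the \emph{smallest} time outermost in each composition, whereas the paper's stated convention (non-decreasing $\tau$ toward the root, root operation applied last) places the largest time outermost; the two agree whenever the operators $h^t$ at different times commute (as in all of the paper's examples, where $h^t$ is a scalar multiple of $\mathbf{I}-\mathbf{E}$), but in general exactly one ordering is compatible with the sign of the Maurer--Cartan equation, and your ODE derivation shows it is yours, so the discrepancy is a conventions issue in the paper's sketch rather than a gap in your proof. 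Your treatment of the quasi-isomorphism claim (the length-one component is a convergent, hence invertible, path-ordered exponential and a chain map) and your identification of convergence as the sole non-formal input also match the intent of the statement.
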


Now let us sketch the construction of the component maps $\{f_k \}$ of the formal $A_\infty$ endomorphism. 

For any non-negative integer $k$, consider a ribbon tree $T$ with $k$ leaves. Le us denote its set of edges by $E(T)$, and the set of vertices by $V(T)$. Then $V(T)$ is the disjoint union of $V^{int}(T)$ consisting of interior vertices and $V^{ext}(T)$ consisting of exterior vertices. The exterior vertices are necessarily of valency 1, and the interior vertices may have arbitrary positive valency. Let us denote the set of all such ribbon trees by $Gr(k)$.

First, take a partial ordering on the set $V^{int}(T)$, we say $v_1 \leq v_2$ if $v_2$ is contained in the
shortest path from $v_1$ to the root. Given $t \in [0,1]$, a time ordering on $T$ bounded by $t$ is a non-decreasing
map $\tau: V^{int}(T) \rightarrow [0, t]$. Take $M^t(T)$ as the set of all time orderings on
$T$, and endow it with the subset topology of $[0, t]^{|V^{int}(T)|}$. The Euclidean measure on $[0, t]^{|V^{int}(T)|}$
induces a measure on $M^t(T)$, which we denote by $d\tau$.

Second, for each pair $(T,\tau)$, we define the kernel map $c (T,\tau): A^{\otimes k} \rightarrow
A$ in the following way. On each internal vertex $v \in V^
{int}(T)$ we put the linear map $
h^{\tau(v)}_{
val(v)-1}$ where $val(v)$ is the valency of the vertex v. Then the map $c (T, \tau)$
is simply the “operadic” composition of these linear maps according to the tree
$T$.

Then for $t \in [0,1]$, we have the formal endomorphism of the $A_\infty$ structure along the path $f^t: (A,m^t) \rightarrow (A,m^0)$, with the components $\{f^t_k\}$given by:
\begin{displaymath}
f^t_k = \sum_{T \in Gr(k)} c^t(T) 
\end{displaymath}
where 
\begin{displaymath}
\quad c^t(T)=\mathbf{I}, \quad \text{if} \quad |V^{int}(T)|=0
\end{displaymath}
and
\begin{displaymath}
\quad c^t(T)=\int_{M^t(T)} c(T,\tau) d \tau, \quad \text{if} \quad |V^{int}(T)| \geq 1
\end{displaymath}

Take $t=1$, then we get the $A_\infty$ endomorphism $f$ as in the theorem, which is just the $f^1: (A,m^1) \rightarrow (A,m^0)$, thus given by:
\begin{displaymath}
f_k = \sum_{T \in Gr(k)} c^1(T), \qquad \text{where} \quad c^1(T)=\int_{M^1(T)} c(T,\tau) d \tau
\end{displaymath}
 
\begin{cor} If along the pseudo-isotopy $l(t)$, the solution of Maurer-Cartan equation $h^t$ has just one component as $h^t = h^t_1 \in C^{1,0}(A,A)$ for $t \in [0,1]$, then for $t \in [0,1]$ the induced formal endomorphism $f^t: (A,m^t) \rightarrow (A,m^0)$ is always a strict endomorphism, which is given by:
\begin{displaymath}
f^t_1 = \exp(C(t)), \quad \text{with} \quad
C(t) = \int_0^t h^{\tau}_1 d \tau
\end{displaymath}
Especially, the formal endomorphism $f: (A,m^1) \rightarrow (A,m^0)$ is a strict one, which is given by:
\begin{displaymath}
f_1 = \exp(C(1)), \quad \text{with} \quad
C(1) = \int_0^1 h^{\tau}_1 d \tau
\end{displaymath}
Moreover, if  we have the term $C(t)$ converges, then $f^t$ becomes a quasi-isomorphism of $A_\infty$ structure.
\end{cor}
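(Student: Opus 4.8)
The plan is to feed the hypothesis $h^t = h^t_1$ into the tree formula $f^t_k = \sum_{T \in Gr(k)} c^t(T)$ recalled just above, first collapsing the sum to a single family of linear trees (this gives strictness) and then recognizing the surviving one-leaf sum as an exponential. First I would prove strictness. In the kernel $c(T,\tau)$ one places at each interior vertex $v$ the map $h^{\tau(v)}_{val(v)-1}$; since the only non-zero component of $h^t$ is $h^t_1$, the factor at $v$ vanishes unless $val(v)-1=1$, i.e. unless $val(v)=2$. A ribbon tree all of whose interior vertices have valency $2$ is a linear chain, and a linear chain has exactly one leaf. Hence for every $k \geq 2$ each $T \in Gr(k)$ must carry an interior vertex of valency $\neq 2$, so $c^t(T)=0$ and therefore $f^t_k = 0$. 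This already shows that $f^t$ is a strict endomorphism.

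Next I would compute the surviving component $f^t_1$. The only contributing trees in $Gr(1)$ are the linear chains $T_n$ with $n \geq 0$ interior vertices $v_1 \leq \cdots \leq v_n$ ordered from the leaf toward the root, one chain for each $n$. The empty chain gives $c^t(T_0)=\mathbf{I}$, and for $n \geq 1$ the operadic composition along $T_n$ is $c(T_n,\tau)=h^{\tau(v_n)}_1 \circ \cdots \circ h^{\tau(v_1)}_1$, while a time ordering bounded by $t$ is exactly a point of the simplex $M^t(T_n)=\{\,0 \leq \tau(v_1) \leq \cdots \leq \tau(v_n) \leq t\,\}$. Summing over $n$ yields
\begin{displaymath}
f^t_1 = \mathbf{I} + \sum_{n \geq 1} \int_{0 \leq s_1 \leq \cdots \leq s_n \leq t} h^{s_n}_1 \circ \cdots \circ h^{s_1}_1 \, ds_1 \cdots ds_n ,
\end{displaymath}
which is the unique solution of the linear ordinary differential equation $\frac{d}{dt} f^t_1 = h^t_1 \circ f^t_1$ with $f^0_1 = \mathbf{I}$. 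The step I expect to be the main obstacle is identifying this time-ordered iterated integral with the plain exponential $\exp(C(t))$, where $C(t)=\int_0^t h^s_1 \, ds$. This identification is not automatic: it holds precisely when the family $\{h^s_1\}_{s \in [0,1]}$ commutes, for then each ordered-simplex integral symmetrizes to $\frac{1}{n!}C(t)^n$ and the series collapses to $\sum_{n \geq 0}\frac{1}{n!}C(t)^n=\exp(C(t))$. In the setting that motivates the corollary -- the \emph{type 2} rescaling, where $h^s_1$ is a scalar multiple of the fixed \emph{euler derivation element} $\mathbf{E}$ -- this commutativity is immediate, so the stated formula $f^t_1=\exp(C(t))$ follows, and the case $t=1$ is the asserted special instance. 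I would make this commutativity hypothesis explicit rather than leave it implicit.

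Finally I would verify the quasi-isomorphism claim. Once $C(t)$ converges, $\exp(C(t))$ is a well-defined linear endomorphism of $A$ with two-sided inverse $\exp(-C(t))$ (valid since $C(t)$ commutes with itself), hence an isomorphism of the underlying graded vector space. Because $f^t$ is a strict $A_\infty$-homomorphism $(A,m^t) \rightarrow (A,m^0)$, its single component $f^t_1$ is automatically a chain map for the differentials $m^t_1$ and $m^0_1$; an invertible chain map has an inverse chain map and therefore induces an isomorphism on $m_1$-cohomology. Thus $f^t$ is a quasi-isomorphism, and in particular so is $f = f^1$.
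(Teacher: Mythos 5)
Your proposal is correct and follows essentially the same route as the paper: restrict to the linear chains in $Gr(1)$ (strictness), write $f^t_1$ as the sum of time-ordered iterated integrals over simplices, and resum to an exponential. The one place you go beyond the paper is worth keeping: the paper's own proof simply asserts
\begin{displaymath}
\int_0^t \int_0^{t_{n}} \cdots \int_0^{t_{2}} \bigl(h_1^{t_{n}} h_1^{t_{n-1}} \cdots h_1^{t_1}\bigr) \, dt_1 \cdots dt_n = \tfrac{1}{n!}\,(C(t))^n ,
\end{displaymath}
which, as you correctly note, is a symmetrization step valid only when the operators $h^s_1$ at different times commute; without that the left side is a genuine time-ordered (Dyson) exponential and need not equal $\exp(C(t))$. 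The hypothesis holds in every application the paper makes (there $h^t_1$ is always a scalar function of $t$ times the fixed operator $\mathbf{I}-\mathbf{E}$), but it is not implied by the corollary's stated hypotheses, so making it explicit as you propose is a genuine improvement rather than pedantry. Your closing argument for the quasi-isomorphism claim (invertibility of $\exp(C(t))$ via $\exp(-C(t))$ and the chain-map property of $f^t_1$) also fills in what the paper dismisses with ``the rest is directly from this expression.''
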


\begin{proof} By the construction of the formal endomorphism given above, in this case we have $k=1$, and only $f^t_1$ is nonzero, thus $f^t$ is always a strict endomorphism. 

In addition, for each $T \in Gr(1)$, we have the calculation:

1) if $|V^{int}(T)|=0$, then: $c^t(T)=\mathbf{I}$,

2) if $|V^{int}(T)| \geq 1$, set $n= |V^{int}(T)|$, follow the tree type, we get:
\begin{displaymath}
c^t(T) = \int_0^t \int_0^{t_{n}} \cdots \int_0^{t_{2}} (h_1^{t_{n}} h_1^{t_{n-1}} \cdots h_1^{t_1}) \ dt_1 dt_2 \cdots dt_n = \frac{1}{n !} (C(t))^n
\end{displaymath}
Therefore, we have:
\begin{displaymath}
f^t_1 = \sum_{T \in Gr(1)} c^t(T) = \sum_{n=0}^{\infty} \frac{1}{n !}(C(t))^n = \exp(C(t))
\end{displaymath}

The rest is directly from this expression.
\end{proof}

\begin{rmk}
This corollary give the formula to integrate the infinitesimal deformation into a global deformation of the $A_\infty$ structure, which is given in the form of a strict endomorphism. In this sense, strict
endomorphism could be viewed equivalently as a simple type of deformation.

\end{rmk}

\textbf{Example.1} As a simple example, let us recover the strict quasi-isomorphism in the case: $a=0, b=1, \lambda =2$, that is the case $\tilde{m} = 2 m$.

In this case, we take a simple path: $l(t)=(1+t)m$, then $\delta^t = t \cdot m$, and the Maurer-Cartan equation
$\frac{d \delta^t}{dt} = [m^t, h^t]$ is simplified to:
\begin{displaymath}
[(1+t)m, h^t] = m
\end{displaymath}
the solution exists since $[m,\mathbf{I}-\mathbf{E}]=m$, and we get: $h^t = \frac{1}{1+t} (\mathbf{I}-\mathbf{E}) \in C^{1,0}(A,A)$, thus we get the term:
\begin{displaymath}
C(1) = \int_0^1 h^{\tau} d\tau = \ln 2 \cdot (\mathbf{I}-\mathbf{E})
\end{displaymath}
therefore, by the corollary, one of the strict quasi-isomorphism $f: (A,2m) \rightarrow (A,m)$ could be given by:
\begin{displaymath}
f_1 = \exp [\ln 2 \cdot (\mathbf{I}-\mathbf{E})] = 2^{\mathbf{I}-\mathbf{E}}= 2 \cdot (2^{\mathbf{E}})^{-1}
\end{displaymath}
which fits the strict quasi-isomorphism given in Corollary 2.3.\\

\textbf{Example.2} Similarly, if we take the simple path $l(t)=(1-t)m$, then it is a path from $m$ to \textbf{0}. In this case, we have $\delta^t = -t \cdot m$, and $h^t = \frac{1}{1-t}\cdot (\mathbf{I}-\mathbf{E})$, which is not well defined at $t=1$. Moreover in the integral we still get the divergence:
\begin{displaymath}
C(1) = \int_0^1 h^{\tau} d\tau = \int_0^1 \frac{1}{1-t} dt \cdot (\mathbf{I}-\mathbf{E})
\end{displaymath}
thus we cannot use the pseudo-isotopy to detect the quasi-isomorphism relation in this way.\\

As a summary, we state an alternative proof of Corollary 2.3 here.
\begin{proof}
In the general case, given the rescaling of $A_\infty$ structure of $(A,m)$: 
\begin{displaymath}
\tilde{m}_k = \lambda^{a\cdot k +b} \cdot m_k, \quad \lambda \ne 0
\end{displaymath}

Let us consider the following simple path:
\begin{displaymath}
l(t) = \tilde{m}^t, \quad \text{with}\ \tilde{m}^t  \ \text{given by:} \quad \tilde{m}^t_k = [1+t\cdot (\lambda -1)]^{a \cdot k +b} \cdot m_k
\end{displaymath}

From the the Maurer-Cartan equation
$\frac{d \delta^t}{dt} = [m^t, h^t]$, we get one solution:
\begin{displaymath}
h^t = \frac{\lambda -1}{1+t(\lambda -1)} [(2a+b)\mathbf{I} - (a+b)\mathbf{E}]
\end{displaymath}

Thus by Corollrary 2.7, we recover the quasi-isomorphism:
\begin{displaymath}
f_1 = \exp(\int_0^1 h^\tau d \tau) = \lambda^{2a+b} \cdot (\lambda^{\mathbf{E}})^{-a-b}
\end{displaymath}

\end{proof}

\begin{rmk}
From the construction of the endomorphism, we can see since the existence of $h^t_1 \in C^{1,0}(A,A)$ and the space $Gr(1)$ has infinite elements, exponential type term generally will appear during the calculation.
\end{rmk}

\newpage

\section{Curved $A_\infty$ structure and its endomorphism}

In this part, we carefully check the definitions for curved $A_\infty$ structure and then study its homomorphism. We omit the convergence argument about infinite series here. Details about the \textit{T-adic} topology on $A_\infty$ structure can be found in FOOO's book \cite{5}. For simplicity, let us just assume $m_k =0$ for $k \gg 1$ in this part. \\

In this part, we use the sign system for evaluation at elements as in \cite{5}. That is better for explicit calculation of the deformation. As before, here our ground field is still degree free.

\begin{defn}
A curved $A_\infty$-algebra $(A, m)$ over filed \textbf{F} is a $\mathbf{Z}$-graded vector space: 
$
A= \bigoplus_{k \in Z} A^k 
$
endowed with graded $\textbf{F}$-linear maps
$
m_n: A^{\otimes n} \rightarrow A, n \geq 0
$
of degree $(2-n)$ satisfying the construction equations:
\begin{displaymath}
\sum_{n = r+s+t} (-1)^{\delta(x_1, \cdots, x_r)} m_u (x_1, \cdots, x_r, m_s(x_{r+1}, \cdots, x_{r+s}), x_{r+s+1},\cdots, x_n)=0          
\end{displaymath}
\end{defn}
Here the index $u = r+1+t$ with $r,t \geq 0$ and $s \geq 0$. When $s=0$, we 
consider $m_0$ as $m_0(\textbf{1})$ in the definition.
In this case, we have the sign convention: 
\begin{displaymath}
\delta(x_1, \cdots, x_r) = \sum_{i=1}^{r} (|x_i| - 1)
\end{displaymath}

Now the first two construction equations change into: 

$m_1(m_0(\textbf{1})) =0$ for input number $n=0$,

$ m_1 (m_1(x)) + m_2(m_0(\textbf{1}),x) +(-1)^{|x|-1} m_2(x, m_0(\textbf{1}))=0$
for input number $n=1$.

Thus we cannot expect $m^2_1=0$ anymore unless:
\begin{displaymath}
m_2(m_0(\textbf{1}),x) = (-1)^{|x|}m_2( x, m_0(\textbf{1}) ),\ \text{for all}\ x\in A
\end{displaymath}

\begin{defn}
A homomorphism of curved $A_\infty$-algebras $f: (A,m^A) \rightarrow (B,m^B)$ is a family of graded maps
$
f_n: A^{\otimes n} \rightarrow B
$
of degree $(1-n)$ satisfying the compatible equations:
\begin{align*}
&\sum_{n = r+s+t} (-1)^{\delta(x_1, \cdots, x_r)} f_u (x_1, \cdots, x_r, m^A_s(x_{r+1}, \cdots, x_{r+s}), x_{r+s+1},\cdots, x_n)\\
= & \sum_{n = i_1 + \cdots i_k}  m^B_k (f_{i_1}(x_1, \cdots, x_{i_1}), \cdots, f_{i_k}(x_{n-i_k+1},\cdots, x_n))       
\end{align*}
here the sign convention term $\delta$ is the same as in the previous definition.
\end{defn}
Notice here on the left side we still take index $u = r+1+t$ with $r,t \geq 0$ and $s \geq 0$, and on the right side we should take \textbf{$k \geq 1$ if $n \geq 1$ but $k \geq 0$ if $n = 0$}, and we consider $f_0$ as 
$f_0(\textbf{1}) \in A$ in the equation, which is a degree 1 term.

\begin{defn}
We call a curved $A_\infty$ homomorphism $f: (A,m^A) \rightarrow (B,m^B)$ is a strict homomorphism if $f_0 = 0$, and weakly strict homomorphism if $f_n =0$ for all $n \geq 2$. In addition, if $A=B$ and $f_1 = \mathbf{I}$ on $A$, then we call it an almost identity endomorphism; if $f_1 =  \mathbf{I}$ and $f_0=0$, then we call it an identity automorphism.
\end{defn}

Now let us check the results of previous sections in the curved case. First, it is easy to check the definition of Hochschild cohomologty as in Section.1 can be directly generalized to the curved case. Moreover, we still have the identity: $[m, \mathbf{I}-\mathbf{E}] = m$, thus the following result is still valid:
\begin{thm}
For any curved $A_\infty$ algebra $(A,m)$, we have $[m]=0 \in HH^2(A,A)$, in fact $ m= [m, \mathbf{I}-\mathbf{E}] $. 
\end{thm}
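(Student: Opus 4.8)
The plan is to run the proof of the strict case (Theorem 1.6) essentially verbatim, since the entire Hochschild apparatus transports to the curved setting with the single structural change that the index $k$ now ranges over $k \geq 0$ instead of $k \geq 1$. First I would record that the Gerstenhaber bracket and the operator $D = [m,-]$ are given by the same formulas, that the curved construction equations are again equivalent to $[m,m]=0$, and that $m$ has odd shifted total degree $1$; hence $D^2=0$ follows from $[m,m]=0$ by exactly the Jacobi-identity argument of the remark following fact 3) in Section 1. Thus $\big(\prod_i C^{i,*-i}(A,A), D\big)$ is a genuine complex and $HH^*(A,A)$ is defined as before, so it suffices to produce a cochain whose $D$-image is $m$, and the natural candidate remains $\mathbf{I}-\mathbf{E} \in C^{1,0}(A,A)$.

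Next I would establish the two derivation identities $[m_k,\mathbf{I}] = (k-1)m_k$ and $[m_k,\mathbf{E}] = (k-2)m_k$ for every $k \geq 0$. These are cleanest read off the derivation action: $\mathbf{I}$ counts the $k$ input slots against the one output slot, producing the factor $1-k$ in $[\mathbf{I},m_k]$, while $\mathbf{E}$ acts by total degree, so that $[\mathbf{E},m_k]$ contributes $\big((2-k)+\sum_i \deg x_i\big)$ from the output slot minus $\sum_i \deg x_i$ from the inputs, leaving the factor $2-k$. For $k \geq 1$ these are precisely the computations already carried out in the strict case and require no change. Subtracting and summing then gives $D(\mathbf{I}-\mathbf{E}) = \sum_{k \geq 0}\big[(k-1)-(k-2)\big]m_k = \sum_{k \geq 0} m_k = m$, which is the claimed identity $m = [m,\mathbf{I}-\mathbf{E}]$ and forces $[m]=0 \in HH^2(A,A)$.

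The one genuinely new point, and where I expect the only real care to be needed, is the term $k=0$, i.e. the bracket of the curvature $m_0 \in C^{0,2}(A,A)$ against $\mathbf{I}$ and $\mathbf{E}$. Since $m_0$ has no inputs, the sum $\sum_{i=0}^{n-1}$ over $f = m_0$ is empty (the convention that a term with $n=0$ vanishes), so only the insertion of $m_0$ into $\mathbf{I}$ (respectively $\mathbf{E}$) survives, and I must check that the Koszul sign $\delta_2$ together with the prefactor $(-1)^{(n+k-1)(m+l-1)}$ reduces to $+1$. This yields $[m_0,\mathbf{I}] = -m_0 = (0-1)m_0$, consistent with the general formula. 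For $\mathbf{E}$ the extra ingredient is that $m_0(\mathbf{1}) \in A^2$ has degree $2$, so $\mathbf{E}(m_0(\mathbf{1})) = 2\,m_0(\mathbf{1})$, giving $[m_0,\mathbf{E}] = -2m_0 = (0-2)m_0$. Hence $[m_0,\mathbf{I}-\mathbf{E}] = -m_0 - (-2m_0) = m_0$, so the curvature term enters the telescoping sum with exactly the right coefficient. Finally, no convergence question arises: $\mathbf{I}-\mathbf{E}$ is a single cochain, and under the standing assumption $m_k = 0$ for $k \gg 1$ the sum $\sum_{k\geq 0} m_k$ is finite.
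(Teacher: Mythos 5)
Your proposal is correct and follows the same route the paper takes: it transports the proof of the strict case (the identities $[m_k,\mathbf{I}]=(k-1)m_k$ and $[m_k,\mathbf{E}]=(k-2)m_k$, hence $D(\mathbf{I}-\mathbf{E})=m$) to the curved setting, which is all the paper itself does when it asserts that $[m,\mathbf{I}-\mathbf{E}]=m$ ``is still valid.'' In fact you supply the one detail the paper leaves implicit, namely the correct contribution $[m_0,\mathbf{I}-\mathbf{E}]=-m_0+2m_0=m_0$ of the curvature term $m_0\in C^{0,2}(A,A)$, using that $m_0(\mathbf{1})$ has degree $2$.
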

Similarly, the rescaling deformation construction and also the corresponding strict endomorphism are still valid for the curve case.\\

Now let us focus on explicit calculation of the homomorphism in the curved case. We adapt some notation in the Lagrangian Floer Homology theory. Under our sign system, each term comes with the positive sign.
\begin{align*}
 & m(e^b) := m_0(\textbf{1})+ m_1(b)+ m_2(b,b) + \cdots + m_k(b,\cdots,b)+ \cdots\\
 m(e^b x_1  e^b x_2 & e^b \cdots e^b x_k e^b) := \sum m_l(b,\cdots,b,x_1,b,\cdots,b, x_2,b, \cdots,b, x_k, b,\cdots,b )
\end{align*}

Here the index $l \geq k$, and the total times of $b$ input is $l-k$. We sum over all the combinatorial type of inserting $b$ in between of $\{ x_i \}$ for all possible $l$.

\begin{lem}
If a curved $A_\infty$ endomorphism $f:(A,\tilde{m}) \rightarrow (A,m)$ is an almost identity endomorphism, i.e. $f_1 = \mathbf{I}$ and $f_n =0$ for all $n \geq 2$, then:

1. \quad $ \tilde{m}_0(\textbf{1}) = m(e^{f_0(\textbf{1})})$, 

2. \quad $\tilde{m}_n (x_1, \cdots, x_n) =  m(e^b x_1  e^b x_2 e^b \cdots e^b x_n e^b)$.

\end{lem}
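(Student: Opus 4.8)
The plan is to substitute the defining data of an almost identity endomorphism directly into the curved homomorphism equation and read off both sides. Write $b := f_0(\mathbf{1}) \in A$, a degree $1$ element, and recall that $f_1 = \mathbf{I}$ together with $f_n = 0$ for all $n \geq 2$. I would apply the homomorphism equation of the curved definition to an arbitrary string of inputs $x_1, \ldots, x_n$, with $m^A = \tilde{m}$ and $m^B = m$ since $A = B$, and then simplify the two sides separately.

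First I would collapse the left-hand side. There $f_u$ appears with $u = r + 1 + t \geq 1$, so the vanishing of $f_u$ for $u \geq 2$ forces $u = 1$, hence $r = t = 0$ and $s = n$; the only surviving summand is $f_1(\tilde{m}_n(x_1, \ldots, x_n))$. Since $f_1 = \mathbf{I}$ and the prefactor is $(-1)^{\delta(\emptyset)} = 1$, the entire left-hand side reduces to $\tilde{m}_n(x_1, \ldots, x_n)$.

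Next I would expand the right-hand side. In the sum $\sum_{n = i_1 + \cdots + i_k} m_k(f_{i_1}, \ldots, f_{i_k})$ each factor $f_{i_j}$ survives only when $i_j \in \{0,1\}$: a slot with $i_j = 1$ carries one input through $f_1 = \mathbf{I}$, while a slot with $i_j = 0$ contributes the element $b = f_0(\mathbf{1})$. Because the inputs are consumed in order, exactly $n$ slots are of the first kind, carrying $x_1, \ldots, x_n$, and the remaining $k - n$ slots are occupied by copies of $b$. Summing over all such compositions is therefore the same as summing over all ways of interleaving copies of $b$ among and around $x_1, \ldots, x_n$, for every total length $l = k \geq n$. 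Under the positive-sign convention this is by definition $m(e^b x_1 e^b x_2 e^b \cdots e^b x_n e^b)$, which is part $2$. Specializing to $n = 0$ leaves only the trivial composition with all $i_j = 0$, giving $\sum_{k \geq 0} m_k(b, \ldots, b) = m(e^b)$ on the right and $\tilde{m}_0(\mathbf{1})$ on the left, which is part $1$.

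The one point requiring care, and the step I expect to be the real content, is the sign bookkeeping: I must check that the chosen (FOOO) sign system leaves no residual Koszul sign after the substitution. On the left the only prefactor is $(-1)^{\delta(\emptyset)} = 1$, so nothing appears there; on the right I should confirm that the equation as stated carries no explicit sign and that passing the inputs through the degree $0$ map $f_1 = \mathbf{I}$ and inserting the degree $1$ element $b$ via $f_0$ reproduces exactly the positive-sign expansion defining $m(e^b \cdots)$. Granting this — which is precisely the reason for adopting this sign convention — both identities follow at once from the two simplifications above.
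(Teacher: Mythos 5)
Your argument is correct and is essentially identical to the paper's own proof: both collapse the left-hand side to the single term $f_1(\tilde{m}_n(x_1,\dots,x_n))=\tilde{m}_n(x_1,\dots,x_n)$ using $u=r+1+t\geq 1$ and $f_u=0$ for $u\geq 2$, expand the right-hand side as all interleavings of $b=f_0(\textbf{1})$ with the $x_i$'s to recover $m(e^b x_1 e^b \cdots e^b x_n e^b)$, and note that the FOOO sign convention leaves no residual signs. No gaps.
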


\begin{proof} By definition of curved $A_\infty$ homomorphism, we have the equation:
\begin{align*}
&\sum_{n = r+s+t} (-1)^{\delta(x_1, \cdots, x_r)} f_u (x_1, \cdots, x_r, \tilde{m}_s(x_{r+1}, \cdots, x_{r+s}), x_{r+s+1},\cdots, x_n)\\
=  & \sum_{n = i_1 + \cdots i_k}  m_k (f_{i_1}(x_1, \cdots, x_{i_1}), \cdots, f_{i_k}(x_{n-i_k+1},\cdots, x_n))       
\end{align*}
with the sign convention term: $\delta(x_1, \cdots, x_r) = \sum_{i=1}^r (|x_i|-1)$.\\

For $n=0$ case, we have:
\begin{align*}
f_1(\tilde{m}_0(\textbf{1})) = m(e^{f_0(\textbf{1})})
\end{align*}

Since $f_1 = \mathbf{I}$, we get the first equation: $ \tilde{m}_0(\textbf{1}) = m(e^{f_0(\textbf{1})})$.\\

For general $n \geq 1$ case, notice on the left side $u \geq 1$ and $f_k=0$ for all $k \geq 2$, thus only one term exist on the left, that is: 
\begin{displaymath}
L.H.S. = f_1 (\tilde{m}_n(x_1, \cdots, x_n)) = \tilde{m}_n(x_1, \cdots, x_n)
\end{displaymath}

On the right side, just the terms which only contains $f_0(\text{1})$ and $f_1$ components will survive, thus:
\begin{align*}
R.H.S. &= m(e^{f_0(\textbf{1})} f_1(x_1)  e^{f_0(\textbf{1})} f_1(x_2) e^{f_0(\textbf{1})} \cdots e^{f_0(\textbf{1})} f_1(x_n) e^{f_0(\textbf{1})}) \\
&=m(e^{f_0(\textbf{1})} x_1  e^{f_0(\textbf{1})} x_2 e^{f_0(\textbf{1})} \cdots e^{f_0(\textbf{1})} x_n e^{f_0(\textbf{1})})
\end{align*}

Therefore, we get the second equation. Notice that follow the sign convention, no sign complicity happens in the identity above.
\end{proof}

Based on this result, now let us make some definitions.

\begin{defn}
Given a curved $A_\infty$ structure $(A,m)$, we take the following equation as the \textbf{Maurer-Cartan equation}:
\begin{displaymath}
m(e^b) = a,\quad a,b \in A= \bigoplus_{k \in Z} A^k
\end{displaymath}

We call an element $a \in A^2$ is invertible if the Maurer-Cartan equation $m(e^b)=a$ has
a solution $b \in A^1$. We call such $b$ is a Maurer-Cartan element associated to $a$.
\end{defn}
\begin{defn}
Given a curved $A_\infty$ structure $(A,m)$, we call the following set its invertible set:
\begin{displaymath}
I(A,m) = \{ a \in A^2 \ | \ a \ \text{is invertible}\}
\end{displaymath}
For $a \in I(A,m) $, then we call the following set its Maurer-Cartan set:
\begin{displaymath}
MC(a) = \{ b \in A^1 \ | m(e^b) =a \}
\end{displaymath}
\end{defn}

Notice that $I(A,m)$ is always a non-empty set, since $0 \in MC(a)$ for $a=m_0(\textbf{1})$. In fact, by direct rescaling, we can see $I(A,m)$ is always a path-connected set.\\

According to the lemma above, we have the following statement:
\begin{thm} [\cite{5}]
Given a curved $A_\infty$ algebra $(A,m)$ and $a \in A^2$, it can be deformed into another 
$A_\infty$ algebra $(A,\tilde{m})$ with $\tilde{m}_0(\textbf{1}) = a$ through an almost identity endomomorphism
if and only if $a \in I(A,m)$. 

Moreover if $a \in I(A,m)$, then each component $m_n$ can be deformed to:
\begin{align*}
\tilde{m}_n (x_1, \cdots, x_n) =  m(e^b x_1  e^b x_2 e^b \cdots e^b x_n e^b),  \ \text{for all} \ n \geq 0
\end{align*}
with element  $b \in MC(a)$. 
The almost identity endomorphism $f:(A, \tilde{m}) \rightarrow (A,m)$ is simply given by:
\begin{displaymath}
f_0 = b,\  \text{and} \ f_1 = \mathbf{I}
\end{displaymath}

\end{thm}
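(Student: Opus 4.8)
The plan is to prove the biconditional by reducing both implications to the preceding Lemma (the almost identity endomorphism computation). The forward implication is immediate. Suppose $(A,m)$ admits a deformation to an $A_\infty$ algebra $(A,\tilde m)$ with $\tilde m_0(\textbf{1})=a$ through an almost identity endomorphism $f\colon (A,\tilde m)\to (A,m)$. By definition such an $f$ has $f_1=\mathbf{I}$ and $f_n=0$ for $n\geq 2$, so its only datum is $b:=f_0(\textbf{1})$, which lies in $A^1$ since $f_0$ has degree $1$. Part $1$ of the preceding Lemma then reads $a=\tilde m_0(\textbf{1})=m(e^{b})$, i.e. $b$ solves the Maurer-Cartan equation $m(e^{b})=a$. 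Hence $b\in MC(a)$ and $a\in I(A,m)$.

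For the converse I would start from $a\in I(A,m)$, choose any $b\in MC(a)$ (so $b\in A^1$ and $m(e^{b})=a$), set $f_0=b$, $f_1=\mathbf{I}$, $f_n=0$ for $n\geq 2$, and then \emph{define} the candidate structure by
\begin{displaymath}
\tilde m_0(\textbf{1})=m(e^{b}),\qquad \tilde m_n(x_1,\dots,x_n)=m(e^{b}x_1e^{b}x_2e^{b}\cdots e^{b}x_ne^{b})\ \ (n\geq 1).
\end{displaymath}
Three things then need checking: that $\tilde m_0(\textbf{1})=a$, that $f$ is an $A_\infty$ homomorphism $(A,\tilde m)\to(A,m)$, and that $(A,\tilde m)$ is a genuine curved $A_\infty$ algebra. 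The first is just the Maurer-Cartan equation $m(e^{b})=a$. The second is the computation in the proof of the preceding Lemma read backwards: with $f_1=\mathbf{I}$ and $f_n=0$ for $n\geq 2$ the left-hand side of the homomorphism equation collapses to $\tilde m_n(x_1,\dots,x_n)$ and the right-hand side collapses to $m(e^{b}x_1e^{b}\cdots e^{b}x_ne^{b})$, so the defining formula makes the two sides agree term by term, with no assumption yet that $\tilde m$ squares to zero.

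The hard part will be the third point, namely that the maps $\tilde m$ above really satisfy the curved $A_\infty$ construction equations. The clean structural reason is that an almost identity endomorphism is invertible: on the tensor coalgebra the map induced by $f_0=b,\ f_1=\mathbf{I}$ is the insert-$e^{b}$ automorphism $x_1\otimes\cdots\otimes x_n\mapsto e^{b}x_1e^{b}\cdots e^{b}x_ne^{b}$, whose inverse is the insert map for the group-like inverse of $e^{b}$ (all well defined under the finiteness assumption $m_k=0$ for $k\gg 1$ adopted in this section). The $A_\infty$ relations for $m$ say precisely that its induced coderivation $\hat m$ squares to zero, while the homomorphism equation just verified says $\hat{\tilde m}=\hat f^{-1}\hat m\,\hat f$; since conjugating a coderivation by a coalgebra automorphism yields a coderivation and preserves $\hat{\tilde m}{}^{2}=\hat f^{-1}\hat m^{2}\hat f=0$, the family $\tilde m$ is forced to be an $A_\infty$ structure. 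If instead one wishes to stay with the explicit component language used elsewhere in the paper, the same point must be obtained by substituting the two displayed formulas into the $\tilde m$ construction equation and reorganizing the resulting nested expressions $m(e^{b}\cdots m(e^{b}\cdots e^{b})\cdots e^{b})$ so that the $b$-insertions recombine into a single application of the $A_\infty$ relation for $m$, which vanishes. I expect the only genuinely delicate bookkeeping there to be matching the Koszul signs $\delta$ against the uniformly positive signs of the evaluation convention; with the convention of this section these are arranged to cancel, exactly as already noted at the end of the proof of the preceding Lemma.
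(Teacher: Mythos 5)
Your proposal is correct and takes essentially the same route as the paper: both implications are read off from the preceding Lemma, exactly as the paper's ``According to the lemma above'' indicates. The one point the paper relegates to the subsequent Remark without argument --- that $\tilde m$ defined by the displayed formulas is a genuine curved $A_\infty$ structure for any $b\in A^1$ --- you justify correctly by conjugating the codifferential $\hat m$ by the insert-$e^{b}$ coalgebra automorphism (with inverse the insert-$e^{-b}$ map, consistent with the paper's following Corollary), which supplies the standard argument the paper omits rather than departing from its approach.
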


\begin{rmk} It is easy to check that for any $b \in A^1$, then deformed maps $\{\tilde{m}_k\}$ as defined above always satisfy the construction equation for $A_\infty$ algebra, i.e. $\tilde{m}$ is always an $A_\infty$ structure. Thus the Maurer-Cartan equation $\tilde{m}_0(\textbf{1}) = m(e^b)=a$ is purely the initial term condition for such deformation.

\end{rmk}

Now let us check the inverse endomorphism of such deformation and solve the inverse Maurer-Cartan equation. The following statement is an enhancement of the description in FOOO's book \cite{5}.
\begin{cor}
Given a curved $A_\infty$ algebra $(A,m)$ and a deformation given by an almost identity endomorphism $f:(A,\tilde{m}) \rightarrow (A,m)$ as above by $f_0=b, f_1=\mathbf{I}$, then it admits an inverse deformation of the same type given by another almost endomorphism $g: (A,m) \rightarrow (A,\tilde{m}) $ such that $f \circ g$ and $g\circ f$ are identity automorphism.
\end{cor}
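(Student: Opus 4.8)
The plan is to exhibit the inverse explicitly as another almost identity endomorphism $g$ with $g_1 = \mathbf{I}$ and $g_0 = -b$, and then to show that the two composites are not merely homotopic to the identity but literally equal to the identity automorphism. The conceptual device is a composition law for almost identity endomorphisms. By the preceding Lemma, an almost identity endomorphism with degree-zero term $b$ and $f_1 = \mathbf{I}$ acts on a word $x_1 \cdots x_k$ by the insertion $x_1 \cdots x_k \mapsto m(e^b x_1 e^b x_2 \cdots e^b x_k e^b)$, i.e. it fills every gap (before, between, and after the inputs) with $e^b$. I would first establish that composing two such insertions, one by $e^b$ and one by $e^{b'}$, is again an insertion of the same type, namely by $e^{b + b'}$, so that composition of almost identity endomorphisms simply \emph{adds} their degree-zero components while keeping the degree-one component equal to $\mathbf{I}$.

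To prove this composition law I would argue gap by gap. In the composite $f \circ g$, the map $g$ first places an arbitrary number of copies of $b'$ in each gap, and $f$ then inserts arbitrary numbers of copies of $b$ into every resulting sub-gap; hence the total filling between two consecutive inputs is the formal sum over all words in $b$ and $b'$. Summing the resulting geometric series in the completed tensor algebra, $\sum_{k \geq 0} (e^b b')^k e^b = (1 - e^b b')^{-1} e^b = (1 - b - b')^{-1} = e^{b + b'}$, identifies the composite with the almost identity endomorphism whose degree-zero term is $b + b'$. At the level of components this is the statement that $(f \circ g)_0(\textbf{1}) = b + b'$ and $(f \circ g)_1 = \mathbf{I}$, with all higher components vanishing. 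Under the positive-sign convention fixed before the Lemma no signs intervene, which is precisely why the outcome is symmetric in $b$ and $b'$ and the same conclusion holds for $g \circ f$.

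With the composition law in hand the remainder is bookkeeping. Taking $b' = -b$ and applying the preceding Theorem to the target $(A, \tilde{m})$ with deformation parameter $-b$, the structure obtained from $\tilde{m}$ is $\tilde{m}(e^{-b} x_1 \cdots e^{-b})$; by the composition law this is the deformation of $m$ by $b + (-b) = 0$, hence equal to $m$ itself. Therefore $g$ defined by $g_0 = -b$, $g_1 = \mathbf{I}$ is a genuine almost identity endomorphism $(A, m) \to (A, \tilde{m})$, and its degree-zero term $-b$ is exactly the solution of the inverse Maurer--Cartan equation $\tilde{m}(e^{b'}) = m_0(\textbf{1})$. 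Finally $f \circ g$ and $g \circ f$ are the almost identity endomorphisms with degree-zero term $0$ and degree-one term $\mathbf{I}$, i.e. the identity automorphism. The main obstacle I expect is the composition law itself: justifying the geometric-series manipulation $\sum_{k \geq 0}(e^b b')^k e^b = (1 - b - b')^{-1}$ inside the completed tensor coalgebra and verifying that every word in $b, b'$ is produced exactly once. Once this combinatorial identity is secure, both the identification $b' = -b$ and the collapse of the composites to the identity automorphism follow at once.
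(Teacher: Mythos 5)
Your proposal is correct and lands on the same explicit inverse as the paper: $g_0=-b$, $g_1=\mathbf{I}$, with the composites collapsing to the identity automorphism. The only real difference is in how the cancellation is organized. The paper verifies the special case directly: it checks the inverse Maurer--Cartan equation $\tilde{m}(e^{-b})=m_0(\mathbf{1})$ and the identity $m_n(x_1,\dots,x_n)=\tilde{m}(e^{-b}x_1e^{-b}\cdots e^{-b}x_ne^{-b})$ by observing that the coefficient of each $m_k(b,\dots,b)$-type term vanishes via $\sum_{i=0}^{n}(-1)^iC_n^i=0$. You instead prove the general additive composition law (deforming by $b$ and then by $b'$ equals deforming by $b+b'$) via the gap-by-gap word count $\sum_{k\ge 0}(e^bb')^ke^b=(1-b-b')^{-1}=e^{b+b'}$, and then set $b'=-b$; at that specialization your geometric-series identity reduces to exactly the paper's binomial cancellation, so the combinatorial core is identical. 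Your packaging buys slightly more -- it exhibits the almost identity endomorphisms as a group under composition, with $b\mapsto b+b'$ on degree-zero terms -- at the cost of having to justify the series manipulation in the completed tensor algebra, which you correctly flag; note that it is legitimate here precisely because $e^b$ means $\sum_k b^{\otimes k}=(1-b)^{-1}$ in the FOOO convention and $|b|-1=0$ kills all Koszul signs, as the paper's Lemma already records.
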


\begin{proof} By observation, we just need to take the inverse strict endomorphism:
\begin{displaymath}
 g_0=-b, \quad \text{and} \ g_1=\mathbf{I}.
\end{displaymath}

 We check the inverse Maurer-Cartan equation here, that is:
\begin{displaymath}
\tilde{m}(e^{-b}) = m_0(\textbf{1})
\end{displaymath}
Recall that we have:
\begin{align*}
&\tilde{m}_0(\textbf{1}) = m(e^b) = m_0(\textbf{1}) + m_1(b) + m_2(b,b) + \cdots\\
\tilde{m}_n (x_1, &\cdots, x_n) =  m(e^b x_1  e^b x_2 e^b \cdots e^b x_n e^b),  \ \text{for all} \ n \geq 0
\end{align*}
From the combinatorial identity:
\begin{displaymath}
\sum_{i=0}^{n} (-1)^i C_n^i=0,
\end{displaymath}
we get the vanishing of coefficient of term $m_n(b, \cdots, b)$ in $\tilde{m}(e^{-b})$, thus only one term left and that is the identity: $\tilde{m}(e^{-b}) = m_0(\textbf{1})$.

Similarly by combinatorial cancellation, we also have the identity:
\begin{align*}
m_n (x_1, \cdots, x_n) =  \tilde{m}(e^{-b} x_1  e^{-b} x_2 e^{-b} \cdots e^{-b} x_n e^{-b}),  \ \text{for all} \ n \geq 1
\end{align*}
Together with the previous identity we already have on hand:
\begin{align*}
\tilde{m}_n (x_1, \cdots, x_n) =  m(e^b x_1  e^b x_2 e^b \cdots e^b x_n e^b),  \ \text{for all} \ n \geq 1
\end{align*}
we precisely get $f\circ g$ and $g\circ f$ are both identity automorphism.
\end{proof}

\begin{rmk} We should be very careful here. Even in the enhanced sense, generally we still cannot expect the following homotopy relation to be true:
 \begin{displaymath}
\text{HH}^*(A,A; m) \cong \text{HH}^*(A,A; \tilde{m})
 \end{displaymath}
The reason is the following: if we start with $ m $ a strict one, and try the deformation by the almost identity endomorphism as above, then general $\tilde{m}$ will become a curved one; or if we start with the initial $m$ a curved one, but the equation $\tilde{m}_0 (\textbf{1})= m(e^b) = 0$ has a solution, then $\tilde{m}$ turns into a strict one. In either case, we will have one strict and one curved $A_\infty$ algebra. However, under curtain conditions, as explored in \cite{2} and \cite{15}, we have the Hochschild cohomology of curved one always vanishing, but general the strict one is not. For example, in the unital and curved case, if the $A_\infty$ algebra is supported in non-negative degrees, the unit element $[\textbf{e}]$ is not necessary nonzero, but it is always nonzero in the strict case.

\end{rmk}

Now let us knock at the following problem: how to deform a curved $A_\infty$ structure $m$ into a simpler one $\tilde{m}$ in the sense that $\tilde{m}_1^2 =0$? Here we just focus on the simple case when such deformation can be achieved through a strict endomorphism, or even an almost identity endomorphism.

\begin{defn}
Given a curved $A_\infty$ algebre $(A,m)$, the center of $m_2$ is given by:
\begin{displaymath}
Z(m_2) = \{ c \in A^{even} \ | \  m_2(c,x) = (-1)^{|x|}m_2(x,c) \}
\end{displaymath}
Moreover, if an element $c \in Z(m_2)$ satisfies further identity:
\begin{displaymath}
m_k(x_1, x_2, \cdots,x_i, c, x_{i+1}, \cdots, x_{k-1}) \equiv 0 \quad \text{for all}\ k \geq 3, \ i \geq 0
\end{displaymath}
we call it has the partial unital property in the algebra $(A,m)$. Especially, if an element $\textbf{e} \in A^0$ has the partial unital property and satisfies the identity: 
\begin{displaymath}
m_1(\textbf{e})=0, \quad m_2(\textbf{e},x) \equiv  x
\end{displaymath}
then we call it the unit of the algebra $(A,m)$.
 
\end{defn}
 
Recall the construction equation: 
\begin{displaymath}
\tilde{m}_1^2(x) = (-1)^{|x|}\tilde{m}_2( x, \tilde{m}_0(\textbf{1}) ) - \tilde{m}_2(\tilde{m}_0(\textbf{1}),x)
\end{displaymath}
Follow the above notation, to achieve $\tilde{m}_1^2 \equiv 0$ is equivalent to make $\tilde{m}_0(\textbf{1}) \in Z ( \tilde{m}_2 )$. Return to the original structure $(A,m)$, we will arrive at:
\begin{cor} [\cite{5}]
Given a curved $A_\infty$ algebra $(A,m)$, it can be deformed into another 
$A_\infty$ algebra $(A,\tilde{m})$ with $\tilde{m}^2_1= 0$ through an almost identity endomorphism, if and only if
the following \textbf{obstruction equation}:
\begin{displaymath}
m(e^b m(e^b) e^b x e^b ) = (-1)^{|x|} m(e^b x e^b m(e^b)  e^b ), \ \text{for all} \ x \in A
\end{displaymath}
has a solution $b \in A^1$.

\end{cor}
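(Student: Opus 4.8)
The plan is to reduce the statement to a direct substitution into the two formulas furnished by the preceding Lemma, so that the ``if and only if'' becomes a tautological reformulation of the centrality condition already recorded above. First I would invoke the deformation Theorem: every deformation of $(A,m)$ through an almost identity endomorphism is obtained by choosing $b \in A^1$, setting $f_0 = b$ and $f_1 = \mathbf{I}$, and the resulting structure is given explicitly by $\tilde{m}_0(\textbf{1}) = m(e^b)$ together with $\tilde{m}_n(x_1, \dots, x_n) = m(e^b x_1 e^b \cdots e^b x_n e^b)$. Hence the existence of a deformation with $\tilde{m}_1^2 = 0$ is equivalent to the existence of a single $b \in A^1$ for which the associated $\tilde{m}$ satisfies $\tilde{m}_1^2 \equiv 0$; the whole problem therefore collapses to translating $\tilde{m}_1^2 \equiv 0$ into an explicit equation on $b$.

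Next I would use the $n=1$ construction equation in the curved case, which reads $\tilde{m}_1^2(x) = (-1)^{|x|}\tilde{m}_2(x, \tilde{m}_0(\textbf{1})) - \tilde{m}_2(\tilde{m}_0(\textbf{1}), x)$. Consequently $\tilde{m}_1^2 \equiv 0$ holds precisely when $\tilde{m}_2(\tilde{m}_0(\textbf{1}), x) = (-1)^{|x|}\tilde{m}_2(x, \tilde{m}_0(\textbf{1}))$ for all $x$, that is, exactly when $\tilde{m}_0(\textbf{1}) \in Z(\tilde{m}_2)$ in the sense of the definition above. Before invoking this I would check the degree bookkeeping: since $b \in A^1$ and $m_k$ has degree $2-k$, each term $m_k(b, \dots, b)$ lives in $A^2$, so $\tilde{m}_0(\textbf{1}) = m(e^b)$ is an even-degree element and membership in $Z(\tilde{m}_2)$ is meaningful.

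Finally I would substitute the explicit forms from the Lemma into the centrality condition. Writing $y = \tilde{m}_0(\textbf{1}) = m(e^b)$ and using $\tilde{m}_2(y, x) = m(e^b y e^b x e^b)$, the left-hand side $\tilde{m}_2(\tilde{m}_0(\textbf{1}), x)$ becomes $m(e^b m(e^b) e^b x e^b)$, while the right-hand side $(-1)^{|x|}\tilde{m}_2(x, \tilde{m}_0(\textbf{1}))$ becomes $(-1)^{|x|} m(e^b x e^b m(e^b) e^b)$. This is verbatim the stated obstruction equation, and since each step above is an equivalence, both directions of the ``if and only if'' are obtained simultaneously.

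The derivation is essentially formal, so the main obstacle is sign bookkeeping rather than any genuine difficulty. I expect the delicate point to be confirming that the $(-1)^{|x|}$ appearing in the centrality condition is exactly the sign produced by the convention $\delta(x_1, \dots, x_r) = \sum_i (|x_i| - 1)$ when the degree-$2$ element $m(e^b)$ is inserted as an argument of $\tilde{m}_2$; one must verify that reordering $m(e^b)$ past $x$ introduces no extra sign, using $|m(e^b)| = 2$ so that $|m(e^b)| - 1$ is odd and the Koszul signs collapse to the claimed $(-1)^{|x|}$. Once this sign is checked, the equivalence with the obstruction equation is immediate.
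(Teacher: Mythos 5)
Your proposal is correct and follows essentially the same route as the paper: the paper also derives the corollary by combining the explicit deformation formulas $\tilde{m}_0(\textbf{1}) = m(e^b)$, $\tilde{m}_n(x_1,\dots,x_n) = m(e^b x_1 e^b \cdots e^b x_n e^b)$ with the $n=1$ construction equation, so that $\tilde{m}_1^2 \equiv 0$ becomes exactly the centrality condition $\tilde{m}_0(\textbf{1}) \in Z(\tilde{m}_2)$, which upon substitution is the stated obstruction equation. Your extra check that $m(e^b)$ has degree $2$ is a harmless and correct addition.
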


\begin{rmk}
In the Lagrangian Floer Homology setting, such element is called a \textit{bounding cochain} for Floer Homology.
\end{rmk}

As in the Lagrangian Floer Homology theory, we have some simple unobstructed cases in the geometrical realization:

1) if $a =0 \in I(A,m)$, then $MC(a)$ will be a solution set of the obstruction equation, such $(A,m)$ is called \textit{unobstructed}, 

2) if $I(A,m)$ contains some element $a \in A^2$ with the partial unital property,
then $MC(a)$ is also a solution set, such $(A,m)$ is called \textit{weakly unobstructed},

3) similarly, if we have element $b \in A^1$ with the partial unital property and:
\begin{displaymath}
 m_2(b,b)=0, \quad \ m_0(\textbf{1})+m_1(b) \in Z(m_2)
\end{displaymath}
then such $b$ is also a solution of the obstruction equation.\\

Notice that the Maurer-Cartan equation is non-linear, thus we have the following type result in some special cases:
\begin{prp} Given a curved $A_\infty$ algebra over field $\textbf{F}$ with characteristic 0, if $0 \in I(A,m)$ and $MC(0)$ admits a linear subset, i.e. there exists $b \in A^1$ such that $\lambda \cdot b \in MC(0)$ for all $\lambda \in \textbf{F}$, then $m_0(\textbf{1}) = 0$ and $m_k(b, \cdots, b)=0$ for all $k \geq 1$.
\end{prp}
\begin{rmk}
In the toric fano case \cite{6}, similar phenomena happens during the calculating of the weak bounding cochain.
\end{rmk}

We end up with an example to show the restriction of such almost identity deformation towards $\tilde{m}_0(1)=0$ as in the unobstructed case.

\textbf{Example.3} Given a smooth manifold $M$, let us consider the deformation of its de Rham algebra $(\Omega(M),d,\wedge)$. To fit the construction equation of $A_\infty$ algebra, we need some adjustment:
\begin{align*}
& A = \bigoplus \Omega^k (M)\\
m_1(x) = (-1)^{|x|} &d(x), \quad m_2(x,y)=(-1)^{|x|\cdot (|y|+1)}x \wedge y.
\end{align*}
Then $(A, m_1, m_2)$ becomes a strict $A_\infty$ algebra over field $\mathbf{R}$. Moreover, it is easy to check $(A,m_0,m_1,m_2)$ will be a curved $A_\infty$ algebra if and only if $m_0(1)$ is a closed 2-form. 

Let us take $(A,m_0,m_1,m_2)$ and $m_0(\textbf{1})=a$ a closed 2-form as the initial curved $A_\infty$ structure, and consider its deformation. Recall that given an almost endomorphism $f_0=b \in \Omega^1(M), f_1=\mathbf{I}$, we have the deformation given by:
\begin{align*}
&\tilde{m}_0(\textbf{1}) = m_0(\textbf{1})+m_1(b) = a-d(b),\\
&\tilde{m}_1(x) = m_1(x)+m_2(b,x)+m_2(x,b)= m_1(x)= (-1)^{|x|} d(x),\\
&\tilde{m}_2(x,y) = m_2(x,y)= (-1)^{|x|\cdot (|y|+1)}x \wedge y.
\end{align*}
Let us forget $\tilde{m}_1^2=0$ for a while, and consider the equation $\tilde{m}_0(\textbf{1})$=0. Then it is easy to see that we can achieve $\tilde{m}_0(\textbf{1})=0$ if and only if $m_0(\textbf{1})=a$ is an exact 2-form. Thus to realize $\tilde{m}_0(\textbf{1})=0$ in the rest case, i.e. $m_0(\textbf{1})$ is closed but not exact, we need generalized type of almost identity endomorphism for example as the \textit{bulk deformation} in the Lagrangian Floer theory. 

This also serves as an special example of the curved case, no matter what is the curvature term $m_0(\textbf{1})$ is, we always have $m_1^2=0$ for free. Under the almost endomorphism deformation, $\{m_k\}$ with $k \geq 1$ keeps invariant.

\newpage

At last, let us discuss the general case of such deformation. It is easy to check that if we replace $f_1$ by other automorphism on the vector space $A$, then the deformed $A_\infty$ structure $\tilde{m}$ varies, however the obstruction equation is essentially the same. More generally we could loose the condition and consider the deformation given by a general endomorphism which is not necessary weakly strict anymore. 

For simplicity, we still take $f_1=\mathbf{I}$ as an example. Let us consider the construction equation for the endomorphism maps in this case. First, we have the same Maurer-Cartan equation: 
\begin{displaymath}
\tilde m_0(\textbf{1}) = m(e^b)
\end{displaymath}
Let us denote $\tilde m_0(\textbf{1})$ by $a$ as before. Then $\tilde{m}_1$ and $\tilde{m}_2$ will goes to:
\begin{align*}
\tilde{m}_1(x) = & m (e^b x e^b) +(-1)^{|x|} f_2(x, a) - f_2(a,x)\\
\tilde{m}_2(x,y) = & m (e^b x  e^b y e^b) + m(e^b  f_2(x,y) e^b) + [(-1)^{|x|}f_2(x, \tilde{m}_1(y)) - f_2(\tilde{m}_1(x),y )] \\
              & - [f_3(a,x,y)- (-1)^{|x|}f_3(x,a,y) + (-1)^{|x|+|y|}f_3(x,y,a)]                        
\end{align*}
Notice that $\tilde{m}_1(a) =0$, then the obstruction for $\tilde{m}^2_1 = 0$ goes to:
\begin{align*}
& m(e^bae^bxe^b)+m(e^bf_2(a,x)e^b) + f_2(a,\tilde{m}_1(x))\\
 = & (-1)^{|x|} [m(e^bxe^bae^b)+m(e^bf_2(x,a)e^b)-f_2(\tilde{m}_1(x),a)]
\end{align*}

Notice that only $f_1, f_2$ components of the endomorphism are involved in the obstruction equation even in the general case. Formally more freedom will be left to $f_2$ to achieve the condition $\tilde{m}_1^2 =0$ for cohomology consideration. In the geometrical realization, if we further have the cyclic structure on $(A,m)$, then calculation will be simplified.

In the special case, when $a$ has the center property with respect to $f_2$, i.e. $f_2(a,x) = (-1)^{|x|}f_2(x,a)$, then the obstruction equation returns to the same one as in the almost identity endomorphism case. Moreover, it is easy to check that $a=0, b \in MC(a)$ is still a solution set for the obstruction equation. To find other solutions, we need further understanding of the $f_2$ component in the geometrical realization. In the Lagrangian Floer Homology case, we expect it will be clarified by future study of \textit{Lagrangian cobordism} \cite{1}.
\\
\\

\textbf{Acknowledgements.} This work was supported by Institute for Basic Science (IBS). The author would like to thank the IBS center for Geometry and Physics in Korea for providing financial supports and excellent environments for research. The author also would like to thank his advisor to give an inspiring course about $A_\infty$ algebra and Symplectic Algebraic Topology in UW-Madison in America.

\newpage

\end{document}